\documentclass[11pt]{article}
\usepackage{amsmath,amsthm,amssymb,color}

\def\titlerunning#1{\gdef\titrun{#1}}
\makeatletter
\def\author#1{\gdef\autrun{\def\and{\unskip, }#1}\gdef\@author{#1}}
\def\address#1{{\def\and{\\\hspace*{18pt}}\renewcommand{\thefootnote}{}%
\footnote {#1}}%
\markboth{\autrun}{\titrun}}
\makeatother
\def\email#1{\hspace*{4pt}{\em e-mail}: #1}
\def\MSC#1{{\renewcommand{\thefootnote}{}%
\footnote{\emph{Mathematics Subject Classification (2010):} #1}}}
\def\keywords#1{\par\medskip
\noindent\textbf{Keywords:} #1}


\newtheorem{theorem}{Theorem}[section]
\newtheorem{prop}[theorem]{Proposition}
\newtheorem{cor}[theorem]{Corollary}
\newtheorem{lemma}[theorem]{Lemma}



\theoremstyle{definition}

\newtheorem{cons}[theorem]{Construction}
\newtheorem{remark}[theorem]{Remark}



\numberwithin{equation}{section}

\frenchspacing

\textwidth=16cm
\textheight=23cm
\parindent=16pt
\oddsidemargin=0cm
\evensidemargin=0cm
\topmargin=-0.5cm



\def\cL{\mathcal L}

\def\cA{\mathcal A}
\def\cC{\mathcal C}

\def\cB{\mathcal B}
\def\cD{\mathcal D}

\def\cF{\mathcal F}
\def\cG{\mathcal G}
\def\cH{\mathcal H}

\def\cM{\mathcal M}
\def\cN{\mathcal N}
\def\cK{\mathcal K}
\def\cO{\mathcal O}
\def\cP{\mathcal P}
\def\cX{\mathcal X}
\def\cY{\mathcal Y}

\def\cV{\mathcal V}
\def\cT{\mathcal T}
\def\cR{\mathcal R}
\def\cS{\mathcal S}
\def\cW{\mathcal W}
\def\cQ{\mathcal Q}
\def\cZ{\mathcal Z}
\def\PG{{\rm PG}}

\def\GF{{\rm GF}}

\def\PGL{{\rm PGL}}

\def\GL{{\rm GL}}


\begin{document}


\baselineskip=16pt

\titlerunning{}

\title{Subspace code constructions}

\author{Antonio Cossidente
\and
Giuseppe Marino
\and
Francesco Pavese}

\date{}

\maketitle

\address{A. Cossidente: Dipartimento di Matematica, Informatica ed Economia, Universit{\`a} degli Studi della Basilicata, Contrada Macchia Romana, 85100, Potenza, Italy; \email{antonio.cossidente@unibas.it}
\and
F. Pavese: Dipartimento di Meccanica, Matematica e Management, Politecnico di Bari, Via Orabona 4, 70125 Bari, Italy; \email{francesco.pavese@poliba.it}
\and
G. Marino: Dipartimento di Matematica e Applicazioni ``Renato Caccioppoli'', Universit{\`a} degli Studi di Napoli ``Federico II'', Complesso Universitario di Monte Sant'Angelo, Cupa Nuova Cintia 21, 80126, Napoli, Italy; \email{giuseppe.marino@unina.it}}

\MSC{Primary 51E20; Secondary 05B25, 94B65}


\begin{abstract}
We improve on the lower bound of the maximum number of planes of $\PG(8,q)$ mutually intersecting in at most one point leading to the following lower bound: $\cA_q(9, 4; 3) \ge q^{12}+2q^8+2q^7+q^6+q^5+q^4+1$ for constant dimension subspace codes.  We also construct two new non--equivalent $(6, (q^3-1)(q^2+q+1), 4; 3)_q$--constant dimension subspace orbit--codes.
\keywords{circumscribed bundle; constant dimension subspace code; Singer cyclic group.}
\end{abstract}

\section{Introduction}

Let $V$ be an $n$--dimensional vector space over $\GF(q)$, $q$ any prime power. The set $S(V)$ of all subspaces of $V$, or subspaces of the projective space $\PG(V)=\PG(n-1,q)$, forms a metric space with respect to the {\em subspace distance} defined by $d(U,U') = \dim (U+U') - \dim (U\cap U')$. In the context of subspace codes, the main problem is to determine the largest possible size of codes in the space $(S(V), d)$ with a given minimum distance, and to classify the corresponding optimal codes. The interest in these codes is a consequence of the fact that codes in the projective space and codes in the Grassmannian over a finite field referred to as subspace codes and constant--dimension codes, respectively, have been proposed for error control in random linear network coding, see \cite{KK}.

An $(n, M, 2\delta; k)_q$ constant--dimension subspace code (CDC) is a set $\cal C$ of $k$--subspaces of $V$ with $\vert{\cal C}\vert = M$ and minimum subspace distance $d({\cal C}) = \min\{d(U,U') \; \vert \; U,U'\in {\cal C}, U \ne U' \} = 2\delta$. In the terminology of projective geometry, an $(n, M, 2\delta; k)_q$ constant--dimension subspace code, $\delta > 1$, is a set $\cC$ of $(k - 1)$--dimensional projective subspaces of $\PG(n - 1, q)$ such that $|\cC| = M$ and every $(k-\delta)$--dimensional projective subspace of $\PG(n-1, q)$ is contained in at most one member of $\cC$ or, equivalently, any two distinct codewords of $\cC$ intersect in at most a $(k -\delta -1)$--dimensional projective space. The maximum size of an $(n, M, 2\delta; k)_q$ CDC subspace code is denoted by $\cA_q(n, 2\delta; k)$ and several upper bounds are known \cite{XF}, \cite{tables}. The following is known as {\em Johnson bound}:
$$
\cA_q(n, 2\delta; k) \le \frac{(q^n - 1) \cdot \ldots \cdot (q^{n-k+\delta+1} - 1)}{(q^k-1) \cdot \ldots \cdot (q^{\delta+1} - 1)} \cA_q(n - k + \delta, 2\delta, \delta).
$$
As for lower bounds, in \cite{SKK} there is a construction of CDC obtained by using maximum rank distance codes, which yields the bound $\cA_q(n, 2\delta; k) \ge q^{(n-k)(k-\delta+1)}$. Recently subspace codes have been widely investigated by many authors and different approaches of constructing constant dimension codes have been considered.

This paper deals with CDC and it is organized as follows. In Section \ref{1} we describe the geometric background;
Section \ref{2} contains the main result of the paper concerning an improvement on the lower bound of the maximum number of planes of $\PG(8,q)$ mutually intersecting in at most one point. The key idea is to refine the construction method of CDC introduced in \cite{GT} ({\em linkage}) by
using properties of a CDC in $\PG(5,q)$ constructed in \cite{CP}. In this regard, we construct a set of $q^{12} + 2q^8 + 2q^7 + q^6 + q^5 + q^4 +1$ planes of $\PG(8, q)$ mutually intersecting in at most a point providing an improvement on the known lower bound given by $q^{12}+2q^8+2q^7+q^6+1$.
Finally, in Section \ref{3} we are concerned with {\em orbit codes}, i.e.,  constant dimension codes which admit a certain automorphism group and whose members form an orbit under the action of such a group \cite{T, TMR, CRS}. Precisely, we present two constructions of orbit--codes with parameters
$(6,(q^3-1)(q^2+q+1),4;3)_q$. For the first construction we use the set of planes of $\PG(5,q)$ that are tangent to some Veronese surface of the mixed partition of $\PG(5,q)$ constructed in \cite{BBCE}. For the second construction we use a net of Klein quadrics of $\PG(5, q)$ sharing two disjoint planes by suitably selecting Greek planes on each quadric of the net.

We will use the term $n$--space to denote an $n$--dimensional projective subspace of the ambient projective space. We shall consider a point of the ambient projective space as column vectors, with matrices acting on the left. 

\section{Preliminaries}\label{1}

A {\em (non--degenerate) quadric} of a finite projective space is the set of points defined by a non--degenerate quadratic form. A quadric of $\PG(2, q)$ is a set of $q+1$ points no three on a line, called {\em conic}, while a quadric of $\PG(3,q)$ is either {\em hyperbolic} or {\em elliptic}. A hyperbolic quadric ${\cal Q}^+(3,q)$ of $\PG(3,q)$ consists of $(q+1)^2$ points of $\PG(3,q)$ and $2(q+1)$ lines that are the union of two reguli. A {\em regulus} is the set of lines intersecting three skew lines and has size $q+1$. Through a point of ${\cal Q}^+(3,q)$ there pass two lines belonging to different reguli. A plane of $\PG(3,q)$ is either secant to ${\cal Q}^+(3,q)$ and meets ${\cal Q}^+(3,q)$ in a conic or it is tangent to ${\cal Q}^+(3,q)$ and meets ${\cal Q}^+(3,q)$ in two concurrent lines. A hyperbolic quadric $\cQ^+(5,q)$ of $\PG(5,q)$ contains $(q^2+1)(q^2+q+1)$ points. The projective subspaces of maximal dimension contained in $\cQ^+(5,q)$ are planes. In particular there are $2(q+1)(q^2+1)$ planes belonging to $\cQ^+(5,q)$ and they are partitioned in two classes of the same size, called {\em Greek planes} and {\em Latin planes}, respectively. Two distinct planes in the same class meet precisely in one point, while planes lying in different classes are either disjoint or they meet in a line. For more details on quadrics we refer to \cite{h, H, HT}. 

The {\em pencil} of quadrics of $\PG(n, q)$ generated by two quadrics with equations $F = 0$ and $F' = 0$, respectively, is the set of quadrics defined by $\lambda F + \mu F'$, where $\lambda, \mu \in \GF(q)$, $(\lambda, \mu) \ne (0, 0)$. A {\em linear system} of quadrics of $\PG(n, q)$ is a collection $\cB$ of quadrics of $\PG(n, q)$ such that the pencil generated by two quadrics of $\cB$ is contained in $\cB$. 

A cyclic group of $\PGL(3, q)$ permuting points (lines) of $\PG(2,q)$ in a single orbit is called a {\em Singer cyclic group} of $\PGL(3, q)$. A generator of a Singer cyclic group is called a {\em Singer cycle}. See \cite{Huppert}. 

A {\em projective bundle} of $\PG(2,q)$ is a family of $q^2+q+1$ non--degenerate conics of $\PG(2,q)$ mutually intersecting in a point. In other words, the conics in a projective bundle play the role of lines in $\PG(2,q)$, i.e., it is a model of projective plane. Let $\pi$ be a projective plane over $\GF(q)$. Embed $\pi\cong \PG(2,q)$ into $\Pi=\PG(2,q^3)$. Then $\pi={\rm Fix}(\tau)$, where $\tau$ is a collineation of $\Pi$ of order 3. Let us fix a triangle $\Delta$ of vertices $P$, $P^\tau$, $P^{\tau^2}$ in $\Pi\setminus\pi$. Up to date, the known types of projective bundles are as follows \cite{DGG}:

	\begin{itemize}
\item[1.] {\em Circumscribed bundle} consisting of all conics of $\pi$ whose extensions over $\GF(q^3)$ contain the vertices of $\Delta$. This exists for all $q$.

\item[2.] {\em Inscribed bundle} consisting of all conics of $\pi$  whose extensions over $\GF(q^3)$ are tangent to the three sides of $\Delta$. This exists for all odd $q$.

\item[3.] {\em Self--polar bundle} consisting of all conics of $\pi$  whose extensions over $\GF(q^3)$ admit $\Delta$ as a self--polar trangle. This exists for all odd $q$.	
	\end{itemize}
	
Since the triangle $\Delta$ is fixed by a Singer cyclic group of $\PGL(3,q)$, we may conclude that all these projective bundles are invariant under a Singer cyclic group of $\PGL(3,q)$. The first and third type are linear systems of conics, whereas the inscribed bundle is not a linear system. For more details on projective bundles, see \cite{BBEF} and references therein.

\section{On planes of $\PG(8,q)$ mutually intersecting in at most one point}\label{2}

In this section we first investigate the circumscribed bundle of $\PG(2,q)$ and basing on these results  we improve on the lower bound of the maximum number of planes of $\PG(8,q)$ mutually intersecting in at most one point. 

\subsection{The geometric setting}

In \cite[Theorem 2.1]{CP} it has been proved that any projective bundle of $\PG(2,q)$ gives rise to a collection $\cS$ of $q^6+2q^2+2q+1$ planes of $\PG(5,q)$ pairwise intersecting in at most one point. In what follows we briefly summarize how $\cS$ can be obtained. Let $\cB$ be a bundle of a projective plane $\pi=\PG(2,q)$ left invariant by the Singer group $C_1$ and consider $\pi$ as a hyperplane section of $\PG(3,q)$. The stabilizer of $\pi$ in $\PGL(4, q)$, say $\bar{G}$, has structure $q^3:\GL(3,q)$. Let $C_2$ be the subgroup of $\bar{G}$ which fixes pointwise $\pi$. Thus $|C_2| = q^3(q-1)$. By considering $C_1$ as a subgroup of $\bar{G}$, let $G$ be the group generated by $C_1$ and $C_2$. Then $G$ has order $q^3(q^3-1)$, it permutes in a single orbit the points of $\PG(3,q) \setminus \pi$ and $G|_{\pi} = C_1$.   

There are $q^3(q-1)/2$ hyperbolic quadrics of $\PG(3, q)$ meeting $\pi$ in a conic and they are permuted in a single orbit by the group $C_2$. Hence there is a set of $q^3(q^3-1)/2$ hyperbolic quadrics of $\PG(3, q)$, say $\cH$, meeting $\pi$ in a conic of $\cB$. In particular $G$ acts transitively on $\cH$. Consider the Klein correspondence $\rho$ between the lines of $\PG(3,q)$ and the points of the Klein quadric $\cal K$ of $\PG(5, q)$, see \cite{H}. Let $\perp$ be the polarity associated with $\cK$. Under the map $\rho$ the lines of the plane $\pi$ correspond to the points of a Greek plane of $\cal K$, say $\gamma$, the lines of a regulus of $\PG(3,q)$ are mapped to the points of a conic of $\cK$ and the reguli of a hyperbolic quadric of $\PG(3, q)$ correspond to two conics of $\cK$ lying in the planes $\sigma, \sigma^\perp$. The plane defined by the image of a regulus of $\PG(3, q)$ under the map $\rho$ meets $\cK$ in exactly a conic. The image under $\rho$ of the reguli contained in the quadrics of $\cal H$ gives rise to a set $\cS_1$ of $q^6-q^3$ planes of $\PG(5,q)$ meeting $\cal K$ in a conic. The planes in $\cS_1$ are disjoint from $\gamma$ and pairwise meet in at most one point. Note that, since a plane of $\cS_1$ does not contain a line of $\cK$, a plane of $\cK$ and a plane of $\cS_1$ meet in at most one point. Hence $\cS_1$ can be enlarged in two ways.

\begin{cons}\label{c1}
Let $\cS_2$ be that set of $q^6$ planes of $\PG(5, q)$ obtained by adding to $\cS_1$ the set of $q^3$ Latin planes of $\cK$ that are disjoint from $\gamma$. Then every element of $\cS_2$ is disjoint from $\gamma$ and distinct planes in $\cS_2$ meet in at most a point.  
\end{cons}

\begin{cons}\label{c2}
Let $\cS_3$ be that set of $q^6+q^2+q$ planes of $\PG(5, q)$ obtained by adding to $\cS_1$ the set of $q^3+q^2+q$ Greek planes of $\cK$ that are distinct from $\gamma$. Then $\cS_3$ contains $q^6-q^3$ elements disjoint from $\gamma$ and $q^3+q^2+q$ members meeting $\gamma$ in a point. Again, two planes of $\cS_3$ share at most a point.  
\end{cons}

\begin{cons}\label{c3}
Through a line $\ell$ of $\gamma$ there are $q-1$ planes of $\PG(5, q)$ meeting $\cK$ exactly in $\ell$. Selecting one of these planes for each line of $\gamma$, we get a family $\cT$ of $q^2+q+1$ planes mutually intersecting in a point. Let $\cS$ be the set obtained by joining together $\cS_3$ and $\cT$.   
\end{cons}

{\bf In the remaining part of this section we assume that $\cB$ is a circumscribed bundle.} We will show that $\cS_1$ contains $q^3-q^2-q$ subsets each consisting of $q^3-1$ pairwise disjoint planes. In order to obtain this result we will investigate the action of the stabilizer $C$ in $G$ of a point $T \in \PG(3, q) \setminus \pi$ on the hyperbolic quadrics of $\cH$ not containing $T$. The group $C$ has order $q^3-1$, it is the direct product $C_1 \times C_3$, where $C_3 = C \cap C_2$ is the homology group of order $q-1$ having axes $\pi$ and centre $T$ and $C|_{\pi} = C_1$. 

\begin{lemma}\label{lines}
The group $C$ has $q+1$ orbits of size $q^3-1$ on lines not in $\pi$ and not through $T$ and acts semiregularly on the $q^3-1$ points of $\PG(3,q) \setminus (\pi \cup \{T\})$.
\end{lemma}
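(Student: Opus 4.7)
The plan is to prove both assertions as consequences of semiregularity of $C$ on the respective sets, with orbit sizes pinned down by the equality $|C|=q^3-1$. The two routine counts I would verify up front are that $\PG(3,q)\setminus(\pi\cup\{T\})$ contains exactly $q^3-1$ points, so regularity on points will follow from semiregularity, and that the number of lines of $\PG(3,q)$ which are neither contained in $\pi$ nor pass through $T$ equals
\[
(q^2+1)(q^2+q+1) - 2(q^2+q+1) \;=\; (q+1)(q^3-1),
\]
so that, once semiregularity on lines is established, $C$ will have exactly $q+1$ orbits of length $q^3-1$ on this set.

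To prove semiregularity on points, I would factor an arbitrary non-identity $g\in C$ as $g=g_1g_3$ with $g_1\in C_1$ and $g_3\in C_3$, and use that the homology group $C_3$ fixes $T$ and leaves every line through $T$ invariant. If $g$ fixed some $P\notin\pi\cup\{T\}$, then $g$ would fix the line $TP$, so $g_1$ would fix the point $TP\cap\pi$ of $\pi$; but $C_1$ is a Singer cyclic group on $\pi$ and therefore acts regularly there, forcing $g_1=e$. Then $g=g_3$ would be a non-trivial homology with axis $\pi$ and centre $T$, and such a homology fixes only the points of $\pi\cup\{T\}$, giving a contradiction.

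For the lines I would apply essentially the same reduction. If $g\in C\setminus\{e\}$ fixed a line $\ell$ neither contained in $\pi$ nor through $T$, then $g$ would fix the point $Q:=\ell\cap\pi$ of $\pi$, whence $g_1=e$ by the Singer regularity, and so $g\in C_3\setminus\{e\}$. The hard step comes here: since $g$ fixes $T$ and leaves every line through $T$ invariant, for any $P\in\ell$ the image $g(P)$ must lie on $TP$; the hypothesis $T\notin\ell$ gives $TP\cap\ell=\{P\}$, so $g$ would fix $\ell$ pointwise. A non-trivial homology with axis $\pi$ and centre $T$, however, fixes only the points of $\pi\cup\{T\}$, so $\ell\subseteq\pi$, contradicting the choice of $\ell$.

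The main obstacle I expect is precisely this last geometric step: ruling out that a non-trivial homology could fix a line that is neither through its centre nor in its axis. The leverage is that $g\in C_3$ fixes every line through $T$, which upgrades the setwise fixation of $\ell$ to pointwise fixation and then collides with the standard description of the fixed-point set of a homology. Once both semiregularity statements are in hand, the orbit sizes and orbit counts fall out immediately from $|C|=q^3-1$ together with the counts above.
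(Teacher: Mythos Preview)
Your argument is correct and follows essentially the same route as the paper: reduce to $g\in C_3$ by using that $C|_\pi=C_1$ acts regularly on the points of $\pi$, then invoke the fixed structure of a homology with centre $T$ and axis $\pi$ to force $g$ to be the identity. The paper does this more tersely (treating lines first and saying the point case ``easily follows''), while you spell out the homology step and the two counts explicitly, but the underlying idea is the same.
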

\begin{proof}
Let $\ell$ be a line not in $\pi$ and not through $T$. It is enough to show that the stabilizer of $\ell$ in $C$ is trivial. Indeed, let $P = \ell \cap \pi$ and let $g \in C$ such that $\ell^g = \ell$, then $P^g = P$ and hence $g \in C_3$. Since $T \notin \ell$, we have that necessarily $g$ is the identity. The second part of the statement easily follows.   
\end{proof}

The following result has been proved in \cite[Section 5]{CP1}.

\begin{lemma}\label{steiner}
Let $A_1, A_2$ two distinct points of $\pi$ and let $g \in C_1$ such that $A_1^g = A_2$. Then $\{\ell \cap \ell^g \;\: | \;\; A_1 \in \ell\}$ is a conic of $\cB$ (passing through $A_1$ and $A_2$). 
\end{lemma}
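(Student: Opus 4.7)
My approach is to apply the classical Steiner construction of a conic from a projectivity between two pencils, and then identify the resulting conic as a member of the circumscribed bundle $\cB$ by checking that its extension to $\PG(2,q^3)$ passes through the vertices of $\Delta$.

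Since $C_1$ is regular on the points of $\pi$, there is a unique $g\in C_1$ with $A_1^g=A_2$. The map $\phi\colon\ell\mapsto\ell^g$ from the pencil $\Lambda_1$ of lines of $\pi$ through $A_1$ to the pencil $\Lambda_2$ of lines of $\pi$ through $A_2$ is induced by the collineation $g$, hence it is a projectivity. I would next rule out that $\phi$ is a perspectivity: a projectivity between two distinct pencils centered at $A_1$ and $A_2$ is a perspectivity if and only if it fixes the common line $\langle A_1,A_2\rangle$, which here would force $\langle A_1,A_2\rangle^g=\langle A_1,A_2\rangle$, contradicting the regular action of the Singer cycle $C_1$ on the lines of $\pi$. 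By Steiner's theorem, the set $\cC=\{\ell\cap\ell^g\mid A_1\in\ell\}$ is therefore a non--degenerate conic of $\pi$ passing through $A_2$ (taking $\ell=\langle A_1,A_2\rangle$, so $\phi(\ell)\ni A_2$) and through $A_1$ (taking $\ell=\langle A_1,A_1^{g^{-1}}\rangle$, so $\phi(\ell)=\langle A_1,A_2\rangle\ni A_1$).

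To show that $\cC$ lies in the circumscribed bundle $\cB$, I would extend everything to $\Pi=\PG(2,q^3)$. The Singer cycle $C_1\subset\PGL(3,q)$, extended to $\PGL(3,q^3)$, becomes diagonalizable with exactly three fixed points on $\Pi$; these points are permuted by the collineation $\tau$ of order $3$ fixing $\pi$, and hence coincide with the vertices $P,P^{\tau},P^{\tau^2}$ of $\Delta$. For $i\in\{0,1,2\}$, put $\ell_i=\langle A_1,P^{\tau^i}\rangle$. Since $g$ fixes each $P^{\tau^i}$, we have $\ell_i^g=\langle A_2,P^{\tau^i}\rangle$, so $P^{\tau^i}\in\ell_i\cap\ell_i^g$. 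Thus the $\GF(q^3)$--extension of $\cC$ contains all three vertices of $\Delta$, and by the defining property of the circumscribed bundle $\cC\in\cB$.

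The only delicate point in the argument is the characterization of perspectivities via the fixed common line, which then reduces the non--perspectivity check to the familiar regular action of $C_1$ on lines. Once this is established, identifying the vertices of $\Delta$ with the three fixed points of $C_1$ on $\Pi$ makes the containment $\cC\in\cB$ an immediate one--line verification from the construction of $\cC$.
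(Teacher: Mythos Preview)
Your proof is correct and follows essentially the same route as the paper's: both apply Steiner's construction to the projectivity $\ell\mapsto\ell^g$ between the pencils at $A_1$ and $A_2$, verify that the common line $A_1A_2$ is not fixed (so the locus is a genuine conic), and then extend to $\Pi=\PG(2,q^3)$ to see that the conic passes through the vertices of $\Delta$, hence lies in the circumscribed bundle $\cB$. Your version is slightly more explicit in two places---the reason the map is not a perspectivity (regularity of $C_1$ on lines) and the direct verification that each $P^{\tau^i}$ arises as $\ell_i\cap\ell_i^g$ for $\ell_i=\langle A_1,P^{\tau^i}\rangle$---but these are exactly the details underlying the paper's phrase ``by repeating the previous argument'' over $\GF(q^3)$.
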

\begin{proof}
Let $g$ be the unique element of $C_1$ such that $A_1^g = A_2$. Let ${\cal P}_{A_1}$ and ${\cal P}_{A_2}$ be the pencils of lines with vertices $A_1$ and $A_2$. Clearly, $g$ is a projectivity sending ${\cal P}_{A_1}$ to ${\cal P}_{A_2}$ that does not map the line $\ell$ onto itself. In \cite{steiner} it is proved that the set of intersection points of corresponding lines under $g$ is a conic $\cC$ passing through $A_1$ and $A_2$ (Steiner's argument). In particular, the projectivity $g$ maps $t_{A_1}$, the tangent line to $\cC$ at $A_1$, onto the line $A_1 A_2$ and the line $A_1 A_2$ onto $t_{A_2}$, the tangent line to $\cC$ at $A_2$. Embed $\pi\cong \PG(2,q)$ in $\Pi=\PG(2,q^3)$. We denote by $\Delta$ the unique triangle of three points of $\Pi\setminus \pi$ fixed by $C_1$. By considering ${\cal P}_{A_1}$ and ${\cal P}_{A_2}$ as pencils of $\Pi$ and by repeating the previous argument, a conic $\bar \cC$ of $\Pi$ passing through the vertices of $\Delta$ and containing $\cC$ arises. It follows that $\cC$ is a member of the circumscribed bundle $\cal B$ of $\pi$ left invariant by $C_1$. 
\end{proof}

\begin{lemma}\label{tangent}\cite[Lemma 3.2]{CP}
Let $\cC$ and $\cC'$ be two conics of $\cB$ meeting at the point $B$. Then, the tangent lines $t$ and $t'$ to $\cC$ and $\cC'$ at $B$, respectively, must be distinct.
\end{lemma}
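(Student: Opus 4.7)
The plan is to obtain a contradiction from the assumption $t=t'$ by applying B\'ezout's theorem to the $\GF(q^3)$--extensions of $\cC$ and $\cC'$. Embed $\pi$ in $\Pi=\PG(2,q^3)$ and, for each conic $\cC\in\cB$, write $\bar\cC\subset\Pi$ for its natural extension, which is again a non--degenerate conic. By definition of the circumscribed bundle, both $\bar\cC$ and $\bar\cC'$ contain the three vertices $P,P^{\tau},P^{\tau^2}$ of $\Delta$; together with $B\in\pi$, this gives four common points. Moreover the tangent to $\bar\cC$ at $B$ coincides with the $\GF(q^3)$--extension of $t$, because the tangent line is cut out by the gradient of a defining equation of $\cC$, whose entries lie in $\GF(q)$. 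Hence under the hypothesis $t=t'$ the two conics $\bar\cC$ and $\bar\cC'$ share a common tangent at~$B$.

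Next I would check that the four points $B,P,P^{\tau},P^{\tau^2}$ are distinct and no three are collinear. Since $B\in\pi$ and the vertices of $\Delta$ lie in $\Pi\setminus\pi$, $B$ is distinct from them. The vertices themselves form a triangle by hypothesis, so it suffices to rule out that $B$ lies on a side of $\Delta$: if, say, $B\in PP^{\tau}$, then applying $\tau$ (which fixes $B$ since $B\in\pi$) gives $B\in P^{\tau}P^{\tau^2}$, whence $B=P^{\tau}\notin\pi$, a contradiction.

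Finally, B\'ezout's theorem yields the conclusion. Two distinct non--degenerate conics of $\Pi$ meet in at most four points counted with multiplicity; however our hypothesis provides three simple intersection points $P,P^{\tau},P^{\tau^2}$ together with an intersection of multiplicity at least two at $B$, totalling at least five. This forces $\bar\cC=\bar\cC'$, and therefore $\cC=\cC'$, contradicting the distinctness of $\cC$ and $\cC'$.

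The only delicate point is the identification, in the first step, of the tangent line of $\bar\cC$ at $B$ with the extension of $t$; once this is noted, the argument is a standard B\'ezout count. The geometric verification that $B$ is not on a side of $\Delta$ is quick but essential, since otherwise the four common points could fail to contribute four independent units to the intersection number.
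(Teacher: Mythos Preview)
Your proof is correct. Note that the paper does not actually prove this lemma: it merely quotes it from \cite[Lemma 3.2]{CP} without argument. Your B\'ezout approach over $\GF(q^3)$ is the natural one and is very likely the argument in \cite{CP} as well: the three vertices of $\Delta$ together with $B$ give four distinct common points of $\bar\cC$ and $\bar\cC'$, and a shared tangent at $B$ forces intersection multiplicity at least $2$ there, exceeding the bound of $4$ for two distinct irreducible conics.

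One minor remark: the collinearity check you include, while harmless, is not strictly needed for the B\'ezout count, since distinct intersection points contribute at least one each to the intersection number regardless of their relative position. What the check really certifies is consistency of the configuration with $\bar\cC$ being non-degenerate (an irreducible conic cannot contain three collinear points), and that is already part of the hypothesis.
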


\begin{lemma}\label{flags}
The $(q+1)(q^2+q+1)$ flags of $\pi$ are partitioned into $q+1$ orbits of equal size under the action of $C_1$. If $\cC \in \cB$, then each of these orbits contains one element $(P, t_{P})$, where $P \in \cC$ and $t_{P}$ is tangent to $\cC$ at $P$ and the $q$ elements $(A, AP)$, with $A \in \cC \setminus \{P\}$.
\end{lemma}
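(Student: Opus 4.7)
\emph{Proof plan.} The plan is to handle the two assertions in order. For the partition into $q+1$ equal-sized orbits, I would argue that $C_1$ acts freely on flags: since $C_1$ has order $q^2+q+1$ and is regular on the $q^2+q+1$ points of $\pi$, the $C_1$-stabilizer of any flag is contained in the trivial $C_1$-stabilizer of its point, so every orbit has length $q^2+q+1$, and the $(q+1)(q^2+q+1)$ flags split into exactly $q+1$ orbits.

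The second assertion is the substantive part. Fix $\cC\in\cB$, and for distinct points $P,A\in\cC$ let $g\in C_1$ be the unique element with $P^g=A$. Revisiting the Steiner projectivity used in the proof of Lemma \ref{steiner}, one sees that the pencil correspondence induced by $g$ sends the tangent $t_P$ to the chord $PA$ and sends $PA$ to the tangent $t_A$; in other words $t_P^g=PA$. Consequently the orbit of the flag $(P,t_P)$ under $C_1$ contains $(A,PA)$ for every $A\in\cC\setminus\{P\}$, giving at least $q+1$ flags supported on $\cC$ in this orbit.

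To finish, I would show that the $q+1$ tangent flags $(P,t_P)$, $P\in\cC$, all lie in different $C_1$-orbits: if $(P_1,t_{P_1})$ and $(P_2,t_{P_2})$ were equivalent with $P_1\neq P_2$, then the unique $g\in C_1$ sending $P_1$ to $P_2$ would have to satisfy $t_{P_1}^g=t_{P_2}$, contradicting $t_{P_1}^g=P_1 P_2$ (a chord is not a tangent). Hence the $q+1$ orbits jointly contain at least $(q+1)^2$ flags incident with $\cC$; as there are exactly $(q+1)^2=(q+1)+(q+1)q$ such flags (one tangent and $q$ chords at each of the $q+1$ points of $\cC$), equality is forced. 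Each orbit then consists of one tangent flag $(P,t_P)$ together with exactly the $q$ chord flags $(A,AP)$ for $A\in\cC\setminus\{P\}$, with no further flag of $\cC$ appearing. The main obstacle is pinning down the identity $t_P^g=PA$: it is implicit in the proof of Lemma \ref{steiner} (the tangent line at $P$ corresponds under the pencil projectivity to the line joining $P$ to its image $A$), and must be extracted from that construction rather than invoked from the statement of Lemma \ref{steiner} as a black box.
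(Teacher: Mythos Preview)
Your proposal is correct and follows essentially the same approach as the paper: both proofs extract from the proof of Lemma~\ref{steiner} the identity that the element $g\in C_1$ with $P^g=A$ sends the tangent $t_P$ to the chord $PA$ (equivalently, sends $PA$ to $t_A$), and use this to place the tangent flag and the $q$ associated chord flags in one orbit. The only difference is in the endgame: the paper explicitly verifies the chord-to-chord relation $(A_1A_2)^{g'}=A_3A_2$ by a short contradiction argument, whereas you bypass this computation with the counting observation that the $q+1$ tangent flags lie in distinct orbits and that there are exactly $(q+1)^2$ flags supported on $\cC$, forcing each orbit to pick up precisely one tangent flag and its $q$ chord companions.
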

\begin{proof}
The proof of the first part of the statement is clear. Let $\cC$ be a conic of $\cB$ and let $A_1, A_2$ be two points of $\cC$. From the proof of Lemma \ref{steiner}, the unique element $g$ of $C_1$ mapping $A_1$ to $A_2$ maps the line $A_1 A_2$ to $t_{A_2}$, the tangent line to $\cC$ at $A_2$. Hence the flag $(A_1, A_1 A_2)$ is mapped by $g$ to the flag $(A_2, t_{A_2})$. 
Let $A_3 \in \cC \setminus \{A_1, A_2\}$ and let $g'$ be the unique element of $C_1$ sending $A_1$ to $A_3$. Steiner's argument with $g$ replaced by $g'$, applied to the pencils ${\cal P}_{A_1}$ and ${\cal P}_{A_3}$, gives rise to a conic $\cC'$ that necessarily belongs to ${\cal B}$. Furthermore, being the conic of $\cal B$ through two distinct points of $\pi$ unique, it follows that $\cC=\cC'$. Since $A_1^{g'} = A_3$ and $A_1 \in t_{A_1}$ then $A_1^{g'}= A_3 \in t_{A_1}^{g'}$ and $t_{A_1}^{g'}=A_1A_3$. 
Analogously, the point $Q=(A_1 A_2)^{g'} \cap (A_1 A_2)$ lies on $\cC$ and, of course, also on $A_1 A_2$. Therefore $Q\in\{A_1,A_2\}$. If $Q=A_1$, then $t_{A_1}^{g'} =A_1A_3=(A_2A_1)^{g'}$ and hence $t_{A_1}=A_1A_2$, a contradiction. It follows that $Q=A_2$ and hence $(A_1A_2)^{g'}=A_3A_2$.
\end{proof}

\begin{lemma}\label{bun}
Let $\cC$, $\cC'$ be two distinct conics of $\cB$ and let $g \in C_1$ such that $\cC' = \cC^g$. If $\cC \cap \cC' = \{B\}$ and $A \in \cC$ with $A^g = B$, then $B \in P P^g$, for every point $P \in \cC \setminus \{A, B\}$. In particular, the lines $BB^g$  and $BB^{g^-1}$ are tangent to $\cC$ and $\cC'$ at $B$, respectively.
\end{lemma}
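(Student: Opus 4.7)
My plan is to use the Steiner-type parameterization of $\cC$ supplied by Lemma~\ref{steiner}. Since $A, B \in \cC$ with $B = A^g$ and $\cC$ is the unique conic of $\cB$ through the two distinct points $A$ and $B$, one has
\[
\cC = \{\, \ell \cap \ell^g \;:\; A \in \ell \,\}.
\]
From the proof of Lemma~\ref{steiner}, the element $g$ maps the tangent $t_A$ to the chord $AB$ and maps $AB$ to the tangent $t_B$. This immediately gives $t_B = (AB)^g = A^g B^g = BB^g$, which is the first tangent claim. For the tangent to $\cC'$ at $B$, I would apply the same Steiner argument to $\cC' = \cC^g$: the points $B$ and $B^g$ lie on $\cC'$ and $g$ sends $B \mapsto B^g$, so $g$ maps the tangent to $\cC'$ at $B$ to the chord $BB^g$, whence the tangent to $\cC'$ at $B$ equals $(BB^g)^{g^{-1}} = BB^{g^{-1}}$.

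For the main statement, fix $P \in \cC \setminus \{A, B\}$ and identify the line in the pencil at $A$ that produces $P$ under the Steiner parameterization. Since $P \ne B$ and $C_1$ is a fixed-point-free Singer group, $P^{g^{-1}} \ne A$, so
\[
m := A\, P^{g^{-1}}
\]
is well defined; then $m^g = (A\, P^{g^{-1}})^g = A^g P = BP$, and indeed $P = m \cap m^g$. Applying $g$ to this equality yields
\[
P^g = m^g \cap m^{g^2} \;\in\; m^g = BP,
\]
so $B$, $P$, $P^g$ are collinear on the line $BP$. Hence $PP^g = BP$ contains $B$, which is the desired conclusion.

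I do not foresee a serious obstacle; the argument is essentially a direct unpacking of the Steiner construction. The one point to check carefully is that the three points $B$, $P$, $P^g$ are pairwise distinct, so that the line through any two of them is well defined: $P \ne P^g$ since $C_1$ acts fixed-point-freely on $\pi$, and $P^g \ne B = A^g$ because $P \ne A$. The exclusion $P \ne A, B$ in the hypothesis is exactly what ensures that neither the construction of $m$ nor the coincidence of $PP^g$ with $BP$ degenerates (when $P = A$ one recovers the chord $AB$, and when $P = B$ the line $m$ would have to be replaced by the tangent $t_A$, producing the tangent $t_B = BB^g$ of the first part instead of a secant).
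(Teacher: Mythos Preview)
Your argument is correct and follows the same Steiner--parameterization idea as the paper; the only difference is that the paper takes the producing line to be $\ell = AP$ (from $P \in \ell$) and reads off $P \in (AP)^g = BP^g$ in a single step, whereas you take $m = AP^{g^{-1}}$ (from $P \in \ell^g$) and then apply $g$ once more.  One exposition point: the assertion ``indeed $P = m \cap m^g$'' needs a word of justification, since on its face it seems to require $P \in m$, which is not obvious; the clean way to see it is that the unique Steiner line $\ell$ through $A$ producing $P$ satisfies $B,P \in \ell^g$, hence $\ell^g = BP = m^g$ and so $\ell = m$.
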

\begin{proof}
Let $P \in \cC \setminus \{A, B\}$. By construction, taking into account Lemma \ref{steiner}, we get $P=(AP)\cap (AP)^g$ and hence $P\in (AP)^g=BP^g$. The second part of the statement easily follows from the proof of Lemma \ref{flags}.
\end{proof}

\begin{prop}\label{regulus}
Let $\cQ$ be a hyperbolic quadric of $\cH$ not containing $T$ and let $\cR$ be one of its reguli. Then no two lines of $\cR$ are in the same $C$--orbit. 
\end{prop}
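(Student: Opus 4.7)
The plan is to argue by contradiction. Assume there exist two distinct $\ell_1, \ell_2 \in \cR$ and $g \in C$ with $\ell_1^g = \ell_2$. Write $g = g_1 g_3$ according to $C = C_1 \times C_3$ and set $P_i = \ell_i \cap \pi \in \cC$. The case $g_1 = e$ is dispatched at once: then $g \in C_3$ fixes $\pi$ pointwise, forcing $P_2 = P_1$, but each point of $\cC$ lies on a unique line of the regulus $\cR$, so $\ell_1 = \ell_2$. Hence $g_1 \ne e$, and since $C_1$ acts regularly on $\pi$, $P_1 \ne P_2$.

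The central device is the projection from $T$ onto $\pi$: set $\tilde\ell_i = \langle T, \ell_i\rangle \cap \pi$, a line of $\pi$ through $P_i$. Since $g$ fixes $T$, one has $g_1(\tilde\ell_1) = \tilde\ell_2$. Because the plane $\langle T, \ell_i\rangle$ meets $\cQ$ in $\ell_i$ together with exactly one further generator, one obtains the dichotomy $\tilde\ell_i = t_{P_i} \iff T \in T_{P_i}\cQ$; otherwise $\tilde\ell_i$ is a chord $P_i P_i^\sharp$ of $\cC$, where $P_i^\sharp$ is the trace on $\pi$ of that second generator. I also rule out $\tilde\ell_1 = \tilde\ell_2$: otherwise $\langle g_1\rangle$ would act fixed-point-freely on the $q+1$ points of this invariant line of $\pi$, so $|\langle g_1\rangle|$ would divide $q+1$ while also dividing $|C_1| = q^2 + q + 1$; since $\gcd(q+1, q^2 + q + 1) = 1$, this forces $g_1 = e$.

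Lemma \ref{steiner} then places $Q := \tilde\ell_1 \cap \tilde\ell_2$ on $\cC$. If $Q \notin \{P_1, P_2\}$, both $\tilde\ell_i = P_i Q$ are chords, and the residual generators of $\cQ$ in the two planes $\langle T, \ell_i\rangle$ both meet $\pi$ at $Q$; hence both equal the line $m_Q$ of the second regulus of $\cQ$ through $Q$. Because $T \notin \cQ$ one has $T \notin m_Q$, so $\langle T, m_Q\rangle$ is a plane contained in each $\langle T, \ell_i\rangle$, forcing the two planes to coincide and making $\ell_1, \ell_2$ coplanar, against the skewness of regulus lines. In the case $Q = P_1$ (the case $Q = P_2$ being symmetric), necessarily $\tilde\ell_2 = P_1 P_2$, and by Lemma \ref{flags} the flag $(P_2, P_1 P_2)$ belongs to the orbit containing $(P_1, t_{P_1})$, whose only member with base point $P_1$ is $(P_1, t_{P_1})$ itself; so $\tilde\ell_1 = t_{P_1}$ and the dichotomy yields $T \in T_{P_1}\cQ$ but $T \notin T_{P_2}\cQ$. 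The second generator $\ell_2'$ of $\cQ$ in $\langle T, \ell_2\rangle$ meets $\pi$ in $\{P_1, P_2\}$ and therefore lies in $\{\ell_1, m_1, m_2\}$. Each option collapses: $\ell_2' = \ell_1$ places $\ell_1, \ell_2$ in a common plane; $\ell_2' = m_1$ forces $\langle T, \ell_1\rangle = \langle T, m_1\rangle = \langle T, \ell_2\rangle$, again coplanar; and $\ell_2' = m_2$ makes $\langle T, \ell_2\rangle = T_{P_2}\cQ$, contradicting $T \notin T_{P_2}\cQ$.

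The main obstacle is to isolate the tangent/chord dichotomy for $\tilde\ell_i$ and to combine it with the orbit structure in Lemma \ref{flags}: without the dichotomy the sub-case $\ell_2' = m_2$ in $Q = P_1$ does not produce a contradiction, and without the orbit lemma one cannot force $\tilde\ell_1$ to be the tangent $t_{P_1}$. The generic case $Q \notin \{P_1, P_2\}$ is the cleanest of the three and reduces, once the residual generator has been identified with $m_Q$, to a dimension count in $\PG(3, q)$.
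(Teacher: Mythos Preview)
Your argument is correct and shares its backbone with the paper's proof: both project the regulus lines from $T$ onto $\pi$, obtain flags $(P_i,\tilde\ell_i)$ carried to one another by $g|_\pi\in C_1$, and then exploit the $C_1$--orbit structure of flags (Lemma~\ref{flags}) together with the tangent/secant behaviour of the projected lines with respect to $\cC$.

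Where the two proofs diverge is in how the contradiction is extracted. The paper looks at \emph{all} $q+1$ projections simultaneously and records the global incidence property that through any point of $\cC$ at most two of the lines $r_1,\dots,r_{q+1}$ pass (and exactly one when that line is tangent); the three cases coming from Lemma~\ref{flags} then each violate this count in one line. Your proof instead stays with just the two lines $\ell_1,\ell_2$, invokes Lemma~\ref{steiner} directly to place $Q=\tilde\ell_1\cap\tilde\ell_2$ on $\cC$, and then unwinds the tangent--plane geometry of $\cQ$ case by case. This is more hands--on and makes the role of the residual generator in $\langle T,\ell_i\rangle$ explicit; the paper's global multiplicity observation packages the same information more succinctly and avoids your longer $Q=P_1$ analysis. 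A small remark: your exclusion of $\tilde\ell_1=\tilde\ell_2$ via the $\gcd$ argument is fine but overkill, since a Singer group of $\PG(2,q)$ already acts regularly on lines, so a non--identity $g_1$ fixes no line.
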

\begin{proof}
Let $\cR = \{\ell_i \;\; | \;\; 1 \le i \le q+1\}$ and let $\cC = \cQ \cap \pi$ be the conic of the bundle $\cB$. By projecting $\ell_i$ from $T$ onto $\pi$, we get a set $\cF$ consisting of the $q+1$ flags $(B_i, r_i)$, $1 \le i \le q+1$, of $\pi$, where $B_i = \ell_i \cap \pi \in \cC$ and $r_i = \pi \cap \langle T, \ell_i \rangle$. Note that through a point of $\cC$ there pass at most two of the lines $r_1, \dots, r_{q+1}$ and if $r_i \cap \cC = \{B_i\}$, then $r_i$ is the unique among the lines $r_1, \dots, r_{q+1}$ that contains $B_i$. Indeed, if $r_i \cap \cC = \{B_i\}$, then the plane $\langle T, \ell_i \rangle$ contains $r_i$. Hence the plane $\langle T, \ell_i \rangle$ is tangent to $\cQ$ at $B_i$ and the line $B_i T$ is tangent to $\cQ$ at $B_i$. Assume, by contradiction, that there exists an element $\bar{g} \in C$ such that $\ell_i^{\bar{g}} = \ell_j$, with $i \ne j$. Then $B_i^g = B_j$ and $r_i^g = r_j$, where $g = \bar{g}|_{\pi} \in C_1$. Hence the two flags $(B_i,r_i)$ and $(B_j,r_j)$ are in the same $C_1$--orbit. From Lemma \ref{flags} it is not possible that both $r_i$ and $r_j$ are tangent to $\cC$. If $r_i$ is tangent to $\cC$ at $B_i$ then, from Lemma \ref{flags}, we have that $r_j = B_i B_j$, contradicting the fact that $r_i$ is the unique among the lines $r_1, \dots, r_{q+1}$ that contains $B_i$. Analogously if $r_j$ is tangent to $\cC$ at $B_j$. If both $r_i$ and $r_j$ are secant to $\cC$ then, from Lemma \ref{flags}, it follows that there exists a point $B_k \in \cC \setminus \{B_i, B_j\}$ such that $r_i = B_i B_k$ and $r_j = B_j B_k$. In this case we find three distinct lines, $r_i, r_j, r_k$ through $B_k$, contradicting the fact that through a point of $\cC$ there pass at most two of the lines $r_1, \dots, r_{q+1}$.       
\end{proof}

\begin{cor}\label{orbits}
Let $\cQ$ be a hyperbolic quadric of $\cH$ not containing $T$ and let $\cR$ be one of its reguli. Then every line of $\PG(3,q)$ not in $\pi$ and not through $T$ is contained in exactly one regulus of $\cR^C$.
\end{cor}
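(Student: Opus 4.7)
The plan is to combine Proposition \ref{regulus} with the orbit structure from Lemma \ref{lines} and finish by a double-counting argument. First I would verify that every line of $\cR$ is an admissible line, i.e.\ neither in $\pi$ nor through $T$: since $\cQ\cap\pi$ is the conic $\cC$ of $\cB$, the plane $\pi$ is secant to $\cQ$ and contains no line of $\cQ$; since $T\notin\cQ$, no line of $\cQ$ passes through $T$. Because $C$ stabilises $\pi$ and fixes $T$, the same holds for every line of every regulus in $\cR^C$.

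Next I would compute the size of $\cR^C$. Write $\cR=\{\ell_1,\dots,\ell_{q+1}\}$. By Proposition \ref{regulus} the $\ell_i$ lie in $q+1$ distinct $C$-orbits; since by Lemma \ref{lines} there are exactly $q+1$ such orbits $O_1,\dots,O_{q+1}$ on lines not in $\pi$ and not through $T$, up to renumbering $\ell_i$ is the unique line of $\cR$ in $O_i$. Any $g\in C$ stabilising $\cR$ setwise must then fix each $\ell_i$ (it permutes the orbits trivially, and each orbit meets $\cR$ in a single line); but the stabiliser in $C$ of any such line is trivial (proof of Lemma \ref{lines}), so $g$ is the identity. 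Therefore $|\cR^C|=|C|=q^3-1$.

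Now I would show that distinct reguli in $\cR^C$ are line-disjoint. Suppose a line $\ell'$ belongs to both $\cR^g$ and $\cR^h$ for $g,h\in C$. Then $\ell'^{g^{-1}}$ and $\ell'^{h^{-1}}$ both lie in $\cR$ and both lie in the $C$-orbit of $\ell'$, which is one of the orbits $O_i$. Because $\cR\cap O_i$ is a single line, $\ell'^{g^{-1}}=\ell'^{h^{-1}}$, so $gh^{-1}$ stabilises this line. By the triviality of stabilisers of such lines, $g=h$, and the two reguli coincide.

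Finally I would conclude by counting: the total number of lines of $\PG(3,q)$ not in $\pi$ and not through $T$ is $(q+1)(q^3-1)$ by Lemma \ref{lines}, while the reguli in $\cR^C$ contain $(q+1)\cdot|\cR^C|=(q+1)(q^3-1)$ lines counted with multiplicity, which by the previous paragraph is also the count without multiplicity. Hence every admissible line lies in exactly one regulus of $\cR^C$. There is no real obstacle here; the only mildly delicate point is the triviality of the set-stabiliser of $\cR$ in $C$, which is exactly what Proposition \ref{regulus} is designed to yield.
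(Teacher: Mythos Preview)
Your argument is correct and follows essentially the same route as the paper's: establish $|\cR^C|=q^3-1$ via Proposition~\ref{regulus} and Lemma~\ref{lines}, deduce that distinct reguli in $\cR^C$ share no line, and finish by counting. The paper compresses the disjointness step into the orbit equality ``number of lines of $\ell^C$ in $\cR$ equals number of reguli of $\cR^C$ through $\ell$'' and leaves the final count implicit, whereas you spell out both explicitly (and also verify that the lines involved are admissible); but the underlying ideas are the same.
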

\begin{proof}
Note that $|\cR^C| = q^3-1$. Indeed, from Lemma \ref{regulus} and Lemma \ref{lines}, we have that $|\cR^C| \ge |\ell^C| = q^3-1$. On the other hand, $|\cR^C| \le |C| = q^3-1$. Assume, by contradiction, that a line $\ell$ is contained in two distinct reguli of $\cR^C$. Since both $\ell^C$ and $\cR^C$ are $C$--orbits and $|\cR^C| = |\ell^C|$, it follows that the number of lines of $\ell^C$ contained in $\cR$ equals the number reguli of $\cR^C$ through $\ell$, contradicting Lemma \ref{regulus}.  
\end{proof}

\begin{prop}\label{symp}
Let $\cQ$ be a hyperbolic quadric of $\cH$ not containing $T$ and let $\cR$ be one of its reguli. Then no two reguli of $\cR^C$ are contained in a $\cW(3,q)$.  
\end{prop}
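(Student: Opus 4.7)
The plan is to argue by contradiction via the Klein correspondence. Assume that $\cR^{h_1}$ and $\cR^{h_2}$ are two distinct reguli of $\cR^C$ both contained in some $\cW(3,q)$. Since every collineation of $\PG(3,q)$ carries a symplectic linear complex to another such complex, replacing $\cW(3,q)$ by its image under $h_1^{-1}$ allows us to assume $h_1=1$, so that $\cR$ and $\cR^g$ both lie in the same $\cW(3,q)$ for some $g\in C$ with $g\ne1$.

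Under the Klein correspondence $\rho$, the set of lines of a $\cW(3,q)$ corresponds to the points of a non-tangent hyperplane section $\cK\cap H$ of the Klein quadric, with $H\cong\PG(4,q)$. Hence the planes $\sigma$ and $\sigma^g$ of $\PG(5,q)$ spanned by the conics $\rho(\cR)$ and $\rho(\cR^g)$ are both contained in $H$, and by the Grassmann formula they meet in at least a point $P$. If $P\in\cK$, then $P$ represents a line of $\PG(3,q)$ lying simultaneously in $\cR$ and $\cR^g$, contradicting Corollary \ref{orbits}; the bulk of the proof is to rule out the remaining possibility $P\in(\sigma\cap\sigma^g)\setminus\cK$.

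To do this I would exploit the action of $C=C_1\times C_3$ on $\PG(5,q)$: the Greek plane $\gamma$ (image of the lines of $\pi$) and the Latin plane $\delta_T$ of $\cK$ (image of the lines through $T$) are disjoint and together span $\PG(5,q)$; moreover $C_3$ fixes both pointwise and hence acts on $\PG(5,q)$ as $\mathrm{diag}(I_3,\lambda I_3)$ with $\lambda\ne1$, while $C_1$ acts as a Singer cycle on each of $\gamma$ and $\delta_T$. Since the lines of $\cR$ are neither contained in $\pi$ nor pass through $T$, $\sigma$ is disjoint from both $\gamma$ and $\delta_T$; the same holds for $\sigma^g$ because $C$ preserves each. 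Consequently, in the decomposition $\PG(5,q)=\langle\gamma,\delta_T\rangle$, $\sigma$ is the graph of a linear isomorphism $\phi\colon\langle\gamma\rangle\to\langle\delta_T\rangle$ and, writing $g=g_1c$ with $g_1\in C_1$, $c\in C_3$, $\sigma^g$ is the graph of $\lambda\,B_{g_1}\,\phi\,A_{g_1}^{-1}$, where $A_{g_1},B_{g_1}$ denote the Singer actions of $g_1$ on $\gamma$ and $\delta_T$. A short linear-algebraic manipulation reduces $\sigma\cap\sigma^g\ne\emptyset$ to the existence of a nonzero $w\in\langle\delta_T\rangle$ satisfying $(\phi\,A_{g_1}\,\phi^{-1}-\lambda B_{g_1})(w)=0$. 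When $g_1=1$ this becomes $(1-\lambda)w=0$, impossible since $\lambda\ne1$.

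The main obstacle is the case $g_1\ne1$, which I would tackle by translating the singular-operator condition back to $\PG(3,q)$ via $\rho^{-1}$. Combining the Steiner construction of the circumscribed bundle with the rigidity imposed by Lemmas \ref{steiner}, \ref{tangent}, \ref{flags} and \ref{bun}, one should see that a nonzero solution of $(\phi\,A_{g_1}\,\phi^{-1}-\lambda B_{g_1})(w)=0$ encodes a point shared by the two conics $\cC=\cQ\cap\pi$ and $\cC^{g_1}=\cQ^{g_1}\cap\pi$ of $\cB$ whose tangent configuration is incompatible with the action of $g_1$ unless a line of $\PG(3,q)$ lies simultaneously in $\cR$ and $\cR^g$, giving the final contradiction with Corollary \ref{orbits}.
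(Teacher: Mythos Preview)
Your reduction via the Klein correspondence is sound: if $\cR$ and $\cR^g$ lie in a common $\cW(3,q)$ then the planes $\sigma,\sigma^g$ spanned by $\rho(\cR),\rho(\cR^g)$ sit in a hyperplane of $\PG(5,q)$ and therefore meet; and the case where the intersection point lies on $\cK$ is indeed dispatched by Corollary~\ref{orbits}. Your graph description of $\sigma$ with respect to the frame $\langle\gamma,\delta_T\rangle$ is also correct, and the scalar computation does kill the case $g\in C_3$. Up to here your argument is a clean repackaging of the easy part of the paper's proof (which corresponds to the boxed case $\cQ\cap\pi=\cQ^g\cap\pi$).

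The gap is the case $g_1\neq 1$, which is essentially the whole proposition. You reduce to the singularity of $\phi A_{g_1}\phi^{-1}-\lambda B_{g_1}$ and then assert that a nonzero kernel vector ``encodes a point shared by the two conics $\cC$ and $\cC^{g_1}$ whose tangent configuration is incompatible with the action of $g_1$''. This is not an argument. A nonzero solution corresponds to a point of $\sigma\cap\sigma^g$ \emph{off} $\cK$, which carries no direct interpretation as a point of $\cC\cap\cC^{g_1}$; the conics always meet in exactly one point $B$ anyway, so nothing is gained by merely locating $B$. What your map $\phi$ really encodes is the set of flags $(B_i,r_i)$ obtained by projecting the lines of $\cR$ from $T$ onto $\pi$ (the $\delta_T$--coordinate of $\rho(\ell_i)$ is $B_i=\ell_i\cap\pi$, the $\gamma$--coordinate is the line $r_i=\pi\cap\langle T,\ell_i\rangle$). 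Showing that $\phi A_{g_1}\phi^{-1}-\lambda B_{g_1}$ is nonsingular for every $\lambda\in\GF(q)^\ast$ therefore amounts to controlling how these flags move under the Singer element $g_1$, and that is precisely the content of the paper's lengthy case analysis using Lemmas~\ref{tangent}, \ref{flags} and \ref{bun} together with the symplectic polarity $\perp_s$. Your linear--algebraic reformulation does not bypass this work; it only relabels it, and the proposal gives no indication of how the cases (whether $\ell'=m$, whether $\ell\subset\sigma$, the position of $U$ relative to $\ell,\ell',m,m'$, etc.) collapse in your language. Until that case $g_1\neq1$ is actually carried out, the proof is incomplete.
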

\begin{proof}
Let $g$ be a non--trivial element of $C$ and consider the two reguli $\cR$ and $\cR^g$ of $\cQ$ and $\cQ^g$, respectively. By Corollary \ref{orbits}, we have that no line is contained in both $\cR$ and $\cR^g$. Assume, by contradiction, that there exists a symplectic polar space $\cW(3,q)$ with symplectic polarity $\perp_s$ such that the $2(q+1)$ lines of $\cR \cup \cR^g$ are lines of $\cW(3,q)$. We consider several cases. 

\medskip
\fbox{$\cQ \cap \pi = \cQ^g \cap \pi = \cC$.} 
\medskip

\noindent
In this case $g \in C_3$. This means that if $\ell$ is a line of $\cR$, then the plane $\langle \ell, \ell^g \rangle$ contains the point $T$. Moreover, if $P \in \cC$ and $\ell \cap \pi = \{P\}$, then $P^{\perp_{s}} = \langle \ell, \ell^g \rangle$ and $T \in P^{\perp_s}$. Therefore, since the points of $\cC$ generate the plane $\pi$, we would have $\pi^{\perp_s} = T$ and hence $T \in \pi$, a contradiction. 

\medskip
\fbox{$\cQ \cap \pi = \cC, \cQ^g \cap \pi = \cC'$, $\cC \ne \cC'$.} 
\medskip

\noindent
Let $\cR^o$ be the opposite regulus of $\cR$ and hence $({\cR}^o)^g$ is the opposite regulus of $\cR^g$, let $\cC \cap \cC' = \{B\}$ and let $\ell, m, \ell', m'$ be the lines of $\cR, \cR^o, \cR^g, (\cR^o)^g$, respectively, through the point $B$. Then $\langle \ell, m \rangle$ is the tangent plane to $\cQ$ at $B$ and $\langle \ell', m' \rangle$ is the tangent plane to $\cQ^g$ at $B$. Note that $\ell \ne \ell'$ and $m \ne m'$, otherwise two reguli of $\cR^C$ or of $(\cR^o)^C$ would share a line contradicting Corollary~\ref{orbits}. Let $\sigma$ be the plane containing $m$ and $m'$ and let $\bar{\sigma}$ be the plane containing $\ell$ and $\ell'$. Thus $B^{\perp_s} = \langle \ell, \ell' \rangle = \bar{\sigma}$. First of all observe that $\sigma \ne \bar{\sigma}$, otherwise the conics $\cC$ and $\cC'$ would admit the same tangent line, namely $\sigma \cap \pi$, at their common point $B$, contradicting Lemma \ref{tangent}.  We consider several cases.

\bigskip

Assume that $\ell' = m$ (which implies $m'\not\subset \bar\sigma$). In this case $\sigma \cap \bar{\sigma} = \ell'$, $\bar{\sigma}$ is tangent to $\cQ$ at $B$ and $\sigma$ is tangent to $\cQ^g$ at $B$. Moreover $\bar{\sigma}$ contains a line of $(\cR^o)^g$, say $t'$, and $\sigma$ contains a line of $\cR$, say $r$. Obviously, $t'\ne \ell'$ and $r\ne \ell$. From Lemma~\ref{bun}, it follows that $t' \cap \cC^g = B^g$, $r \cap \cC = B^{g^{-1}}$ and hence $r^g = \ell'$, $(\ell')^g = t'$. Let $Z = r \cap m'$. Then $Z \ne B$ and since $Z \in \cQ^g$, there exists $v \in \cR^g$ such that $v \cap m' = Z$. Note that both $r, v$ are lines of $\cW(3, q)$ and hence $Z^{\perp_s} = \langle r, v \rangle$. Also $m'^{\perp_s} \subset Z^{\perp_s} = \langle r, v \rangle$, since $Z \in m'$. On the other hand, $B \in m'$ and hence $m'^{\perp_s} \subset B^{\perp_s} = \bar{\sigma}$. Note that $B\in m'$ and $m'\not\subset B^{\perp_s}$, hence $m'$ is not a line of $\cW(3,q)$. Since each of the lines in $\cR^g$ is a line of $\cW(3, q)$ and meets both $m'$ and $t'$, we infer that $m'^{\perp_s} = t'$ and $t' \subset \langle r, v \rangle$. Therefore the point $M=t' \cap r \in\sigma \cap \bar{\sigma} = \ell'$ or, in other words, $M=t' \cap \ell' = r \cap \ell'$. It follows that $(r \cap \ell')^g = r^g \cap \ell'^g = \ell' \cap t' = r \cap \ell'$. Taking into account that $M\notin (\pi \cup \{T\})$, this contradicts Lemma \ref{lines}. A similar argument shows that $\ell$ and $m'$ are distinct lines.

\bigskip

Assume that $\ell \subset \sigma$. Then $\sigma \cap \bar{\sigma} = \ell$ and $B^{\perp_s} = \bar{\sigma}$. Let $n, n'$ be the lines of $\cR^o, (\cR^o)^g$, respectively, that are contained in $\bar{\sigma}$ and let $U = n \cap n'$. Then $U \in \cQ \cap \cQ^g$ and $U \in B^{\perp_s}$.

If $U\notin \ell \cup \ell'$, let $P = n \cap \pi \in \cC$ and $P' = n' \cap \pi \in \cC'$. Then the line $P P'$ is contained in $\sigma$ and $B \in P P'$. Since $\cC' = \cC^g$ and $B \in PP'$, from Lemma~\ref{bun}, we have that $P' = P^g$ and hence $n' = n^g$. Let $t$, $t'$ be the lines of $\cR, \cR^g$, respectively, passing through the point $U$ and let $\tilde{\sigma} = \langle t, t' \rangle$. Thus $U^{\perp_s} = \tilde{\sigma}$ and since $U \in B^{\perp_s}$, we would have that $B \in U^{\perp_s}$, with $B\ne U$. But this means that both $m$ and $m'$ are contained in $\tilde{\sigma}$ and hence $\tilde{\sigma} = \sigma$, contradicting the fact that $U \in \tilde{\sigma} \setminus \sigma$. 

If $U\in\ell$, then, as before, $n'=n^g$ and in the plane $\sigma$ there exists a line $t'$ of $\cR^g$ through $U$, intersecting the conic $\cC'$ at a point, say $P'$. Then $\sigma$ meets the plane $\pi$ in a line which is tangent to $\cC$ in $B$ and secant to $\cC'$ in the points $B$ and $P'$. From Lemma \ref{bun}, it follows that $P'=B^g$ and hence $t' = \ell^g$. Therefore $U^g = \ell^g \cap n^g = t' \cap n' = U$, contradicting Lemma \ref{lines}.

If $U \in \ell'$, note that $B^{\perp_s} = \bar{\sigma}$ and $n, n' \subset \bar{\sigma}$ imply that $B \in n^{\perp_s}$, $B \in n'^{\perp_s}$. On the other hand each of the lines in $\cR$ is a line of $\cW(3, q)$ and meets both $m$ and $n$. Taking into account that $B\in m$ and $m\not\subset B^{\perp_s}$, we get that $m$ is not a line of $\cW(3,q)$. Hence $m^{\perp_s}$  is a line of $\cR^o$ as well and it is disjoint from $m$. Since $n$ is the unique line of $\cR^o$ contained in $\bar\sigma$, we have $m^{\perp_s} = n$. Similarly $m'^{\perp_s} =  n'$. It follows that $\sigma^{\perp_s} = \langle m , m' \rangle^{\perp_s} = m^{\perp_s} \cap m'^{\perp_s} = n \cap n' = U$, that is $U^{\perp_s} = \sigma$. In particular $U \in \sigma$. Since $U \in \ell'$, we would have $U = B$, a contradiction.     

\bigskip

Assume that $\sigma$ does not contain neither $\ell$ nor $\ell'$. Then $\sigma$ contains a line of $\cQ$ and hence is tangent to $\cQ$ at some point distinct from $B$. Analogously $\sigma$ is tangent to $\cQ^g$ at some point distinct from $B$. Let $r, r'$ be the lines of $\cR, \cR^g$, respectively, contained in $\sigma$ and let $U = r \cap r' \in \cQ \cap \cQ^g$. Then  $U^{\perp_s} = \langle r, r' \rangle = \sigma$, with $B \in U^{\perp_s}$ and $B\ne U$. Therefore, $U \in \bar{\sigma} \setminus (\ell \cup \ell')$, with $U \in \cQ \cap \cQ^g$.

Suppose that $U \notin m \cup m'$, i.e. $\sigma\cap\bar\sigma\ne\{m,m'\}$. A similar argument to that used above gives that there are $n, n'$, lines of $\cR^o, (\cR^o)^g$, respectively, that are contained in $\bar{\sigma}$. In particular, $U = n \cap n'$, $m\ne n$ and $m'\ne n'$. Let $P = r \cap \pi \in \cC$ and $P' = r' \cap \pi \in \cC'$. Then the line $P P'$ is contained in $\sigma$ and hence $B \in P P'$. Since $\cC' = \cC^g$ and $B \in PP'$, from Lemma~\ref{bun}, we have that $P' = P^g$ and hence $r' = r^g$. Similarly, let $Q = n \cap \pi \in \cC$ and $Q' = n' \cap \pi \in \cC'$. Then the line $Q Q'$ is contained in $\bar{\sigma}$, hence $B \in Q Q'$, $Q' = Q^g$ and $n' = n^g$. It follows that $U^g \in r^g$ and $U^g \in n^g$, where $r^g \cap n^g = r' \cap n' = U$,  i.e. $U^g=U$, a contradiction. 

If $U \in m$, arguing as above we have $r'=r^g$ and $\sigma \cap \bar\sigma = m$. Also, the plane $\bar\sigma=\langle \ell,m\rangle$ is tangent to $\cQ$ in $B$ and is tangent to $\cQ^g$ since it contains the line $\ell'$. Then there exists a line $t' \in (\cR^o)^g$ such that $t' \subset \bar{\sigma}$, $U \in t'$ and $t'$ intersects the conic $\cC'$ at a point, say $P'$. It follows that $\bar{\sigma}$ intersects the plane $\pi$ at a line which is tangent to $\cC$ at $B$ and secant to $\cC'$ in the points $B$ and $P'$. From Lemma \ref{bun}, we have that $P' = B^g$ and hence $t' = m^g$. Therefore $U^g = r^g \cap m^g = r' \cap t' = U$, a contradiction.

If $U \in m'$, arguing as above we have $r' = r^g$ and $\sigma \cap \bar\sigma = m'$. Also, the plane $\bar\sigma=\langle \ell',m'\rangle$ is tangent to $\cQ^g$ in $B$ and is tangent to $\cQ$ since it contains the line $\ell$. Then there exists a line $t \in \cR^o$ such that $t \subset \bar\sigma$, $U \in t$ and $t$ intersects the conic $\cC$ at a point, say $P$. It follows that $\bar\sigma$ intersects the plane $\pi$ at a line which is tangent to $\cC'$ at $B$ and secant to $\cC$ in the points $B$ and $P$. From Lemma \ref{bun} we have that $P^g = B$ and hence $t^g = m'$. Since $r\ne t$, we get $U=r\cap t$ and hence $U^g=(r\cap t)^g=r^g\cap t^g=r'\cap t'=U$, a contradiction. 
\end{proof}

\begin{theorem}\label{disj}
$\cS_1$ contains $q^3-q^2-q$ subsets each consisting of $q^3-1$ pairwise disjoint planes.
\end{theorem}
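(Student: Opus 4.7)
The plan is to realize each desired subset as a $C$-orbit of reguli. For a regulus $\cR$ of a hyperbolic quadric of $\cH$ not containing $T$, Proposition~\ref{regulus} together with Lemma~\ref{lines} shows that the stabilizer of $\cR$ in $C$ is trivial, so $|\cR^C|=q^3-1$; the aim is to prove that the corresponding $q^3-1$ image planes in $\cS_1$ are pairwise disjoint, and that there are exactly $q^3-q^2-q$ such orbits.

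I would first count the orbits. Each quadric of $\cH$ has $(q+1)^2-(q+1)=q(q+1)$ points outside $\pi$, so double counting pairs (quadric of $\cH$, point of the quadric outside $\pi$), together with the transitivity of $G$ on the $q^3$ points of $\PG(3,q)\setminus\pi$, yields $q(q+1)(q^3-1)/2$ quadrics of $\cH$ containing $T$. Subtracting from $|\cH|=q^3(q^3-1)/2$ leaves $(q^3-1)(q^3-q^2-q)/2$ quadrics of $\cH$ not containing $T$, hence $(q^3-1)(q^3-q^2-q)$ such reguli, partitioned into exactly $q^3-q^2-q$ orbits of $C$, each of size $q^3-1$.

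Next I would establish pairwise disjointness within a single orbit $\cR^C$. Let $\cR',\cR''\in\cR^C$ be distinct and assume towards a contradiction that $\sigma(\cR')\cap\sigma(\cR'')=\{P\}$ (they meet in at most a point since the planes of $\cS_1$ pairwise intersect in at most one point). If $P\in\cK$, then $P$ is the Klein image of a line lying in both $\cR'$ and $\cR''$, contradicting Corollary~\ref{orbits}. Otherwise $H:=\langle\sigma(\cR'),\sigma(\cR'')\rangle$ is a hyperplane of $\PG(5,q)$; if $H$ is not tangent to $\cK$, then $H\cap\cK$ is a non-singular $\cQ(4,q)$ corresponding under the Klein correspondence to a symplectic polar space $\cW(3,q)$ containing every line of $\cR'\cup\cR''$, contradicting Proposition~\ref{symp}.

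The main obstacle is the degenerate subcase in which $H$ is tangent to $\cK$ at a point $Q\in\cK$, whose Klein preimage is a line $\ell_0$ of $\PG(3,q)$. Here I would first observe that no conic plane $\sigma\subset H=Q^\perp$ can contain $Q$: indeed $\sigma\subset Q^\perp$ gives $Q\in\sigma^\perp$, so $Q\in\sigma$ would place $Q$ in $\sigma\cap\sigma^\perp$, which is empty because the restricted form on a conic plane is non-degenerate. Applying this to $\sigma(\cR')$ and $\sigma(\cR'')$, every line of $\cR'$ and of $\cR''$ is a proper transversal of $\ell_0$, so $\ell_0\in(\cR')^o\cap(\cR'')^o$. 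Since $\ell_0$ is a line of a hyperbolic quadric of $\cH$ not containing $T$, and such a quadric meets $\pi$ in a conic, $\ell_0$ lies neither in $\pi$ nor through $T$; Corollary~\ref{orbits} applied to the orbit $(\cR^o)^C$ then forces $(\cR')^o=(\cR'')^o$, i.e.\ $\cR'=\cR''$, a contradiction. Combining the orbit count with pairwise disjointness within each orbit yields the claim.
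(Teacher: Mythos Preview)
Your proof is correct and follows the same overall strategy as the paper: take $C$-orbits of reguli of quadrics in $\cH$ avoiding $T$, count them, and show via the Klein correspondence that the corresponding planes in $\cS_1$ are pairwise disjoint by invoking Corollary~\ref{orbits} (for intersections on $\cK$) and Proposition~\ref{symp} (for intersections off $\cK$).

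The one noteworthy difference is how the case $P\notin\cK$ is handled. You work with the hyperplane $H=\langle\sigma(\cR'),\sigma(\cR'')\rangle$, which forces a case split according to whether $H$ is tangent to $\cK$; the tangent subcase then needs your additional argument about $\ell_0$ lying in both opposite reguli. The paper instead passes to the \emph{dual} hyperplane $P^\perp=\langle\sigma(\cR')^\perp,\sigma(\cR'')^\perp\rangle$, which contains the conic planes of the opposite reguli. Since $P\notin\cK$, this hyperplane is automatically non-tangent, so $P^\perp\cap\cK$ is a genuine $\cQ(4,q)$ and a single appeal to Proposition~\ref{symp} (applied to $(\cR^o)^C$) finishes the argument without any degenerate subcase. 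Your tangent case is therefore correct but avoidable: the fact that $H^\perp=\sigma(\cR')^\perp\cap\sigma(\cR'')^\perp$ lies on $\cK$ is exactly the statement that the opposite reguli share a line, which Corollary~\ref{orbits} already excludes. Either route works; the paper's dual choice simply buys a cleaner one-line conclusion.
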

\begin{proof}
Let $\cQ$ be a hyperbolic quadric of $\cH$ such that $T \notin \cQ$ and let $\cR$ be one of the reguli of $\cQ$. Then $\cR^C$ is a set of $q^3-1$ reguli of $\PG(3,q)$ and the image of $\cR^C$ under the map $\rho$ is a set of $q^3-1$ conics of $\cK$ lying in $q^3-1$ planes of $\cS_1$. Let $\Sigma$ be the subset of $\cS_1$ consisting of these $q^3-1$ planes. We claim that the $q^3-1$ planes of $\Sigma$ are pairwise disjoint. Let $\cR_1, \cR_2$ be two reguli of $\cR^C$ such that their images under the map $\rho$ are two conics of $\cK$ lying in the two planes $\pi_1, \pi_2$ of $\Sigma$. Then $|\pi_1 \cap \pi_2| \le 1$. If $\pi_1 \cap \pi_2$ is a point, say $P$, then either $P \in \cK$ or $P \notin \cK$. If the former case occurs, then $\rho^{-1}(P)$ is a line of $\PG(3,q)$ contained in both reguli $\cR_1, \cR_2$, contradicting Corollary~\ref{orbits}. If the latter case occurs, then the hyperplane $P^\perp$ coincides with $\langle \pi_1^\perp, \pi_2^\perp \rangle$ and meets $\cK$ in a parabolic quadric $\cQ(4,q)$. In particular $\rho^{-1}(\pi_1^\perp \cap \cK)$ and $\rho^{-1}(\pi_2^\perp \cap \cK)$ are $\cR_1^o$ and $\cR_2^o$, the opposite reguli of $\cR_1$ and $\cR_2$, respectively. Moreover, $\cR_1^o$ and $\cR_2^o$ would be contained in the symplectic polar space $\rho^{-1}(\cQ(4,q))$, contradicting Proposition~\ref{symp}.  

To conclude the proof observe that the group $G$ is transitive on the $q^3$ points of $\PG(3,q) \setminus \pi$ and it is transitive on the $(q^6-q^3)/2$ hyperbolic quadrics of $\cH$. Since a quadric of $\cH$ contains $q^2+q$ points of $\PG(3,q) \setminus \pi$, it follows that there are $(q^2+q)(q^3-1)/2$ quadrics of $\cH$ through the point $T$. Therefore there are $(q^3-q^2-q)(q^3-1)/2$ quadrics of $\cH$ not containing $T$. 
\end{proof}

\subsection{Improving the lower bound on $\cA_q(9, 4; 3)$}

In \cite{BOW}, the authors, by prescribing a subgroup of the normalizer of the Singer group of $\PGL(9, 2)$ as an automorphism group, were able to exhibit, with the aid of a computer, a set of $5986$ planes of $\PG(8, 2)$ mutually intersecting in at most a point. Apart from this example, up to date, if $q \ge 3$, the known lower bound on the maximum number of planes of $\PG(8, q)$ mutually intersecting in at most one point, equals $q^6 \cA_q(6, 4; 3) + 1$, see \cite[Corollary 39]{ST} and \cite[Theorem 2.3]{GT}. This leads to the following lower bound: $\cA_q(9, 4; 3) \ge q^{12}+2q^8+2q^7+q^6+1$. Note that $A_q(9, 4; 3) \le (q^6+q^3+1)(q^2+1)(q^4+1)$ and equality occurs if there exists a collection $\cX$ of planes of $\PG(8, q)$ such that every line of $\PG(8, q)$ is contained in exactly one plane of $\cX$, see \cite{tables}. Here we provide a construction which improves the lower bound on the maximum number of planes of $\PG(8, q)$ pairwise intersecting in at most one point.  

\begin{theorem}
There exists a set of $q^{12} + 2q^8 + 2q^7 + q^6 + q^5 + q^4 +1$ planes of $\PG(8, q)$ mutually intersecting in at most one point.
\end{theorem}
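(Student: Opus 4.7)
The plan is to refine the linkage construction of \cite{GT} by exploiting the pairwise disjoint subfamilies of $\cS_1$ provided by Theorem \ref{disj}.

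Setup and linkage core: Write $\PG(8,q) = \langle \Sigma, \Pi \rangle$ with $\Sigma \cong \PG(5,q)$ and $\Pi \cong \PG(2,q)$ complementary subspaces, and identify $\Sigma$ with the ambient space of the CDC $\cS$ of Construction \ref{c3} (so $|\cS| = q^6 + 2q^2 + 2q + 1$). The linkage construction of \cite{GT}, applied with $\cS$ on $\Sigma$ and the singleton CDC $\{\Pi\}$ on $\Pi$, produces a code $\cC_0$ of size $q^6|\cS|+1 = q^{12}+2q^8+2q^7+q^6+1$ whose members pairwise meet in at most a point. Each non-base codeword of $\cC_0$ is either a plane $W \in \cS$ itself (when the MRD matrix is zero) or a ``graph'' plane $L_{W,M} = \{(x,Mx):x \in W\}$ for some non-trivial $M$ in the MRD code driving the linkage; in particular $\cC_0$ already realises the previously known lower bound.

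Augmentation: To gain the extra $q^5+q^4$ codewords I would invoke Theorem \ref{disj} to fix a subfamily $\cS^* = \{W_1, \ldots, W_{q^3-1}\} \subset \cS_1$ of pairwise disjoint planes. The new codewords are ``hybrid'' planes $\Lambda = \langle \ell, P \rangle$, with $\ell$ a line of some $W_j \in \cS^*$ and $P$ a suitably chosen point of $\Pi$. Because the $W_j$ are pairwise disjoint in $\Sigma$, two hybrids attached to distinct $W_i, W_j$ share at most a point of $\Pi$, while hybrids attached to the same $W_j$ can be organised via a matching between the lines of $W_j$ and the points of $\Pi$ so as to meet in at most one point. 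Every hybrid also meets any plane $W' \in \cS \setminus \{W_j\}$ only in the single point $W_j \cap W'$ (if any) and meets the base plane $\Pi$ in exactly $P$. A careful accounting of admissible matchings across the $q^3-1$ disjoint templates is arranged to produce precisely $q^5+q^4$ hybrids.

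Main obstacle: A hybrid $\langle \ell, P \rangle$ with $\ell \subset W_j$ collides with the linkage plane $L_{W_j, M}$ along a full line exactly when $M\ell \subseteq \langle P \rangle$; the zero matrix always satisfies this, so certain ``$M = 0$'' lifts --- namely some of the $W_j$ themselves --- must be swapped out of $\cC_0$ in favour of hybrids, and one must simultaneously ensure that no non-zero $M$ in the MRD code satisfies $M\ell \subseteq \langle P \rangle$ for any selected pair $(\ell, P)$. Controlling this trade-off requires combining the MRD rank-distance property with the rigidity of the circumscribed bundle encoded by Lemma \ref{tangent}, Lemma \ref{bun} and Proposition \ref{symp}: these structural results guarantee that the admissible pairs $(\ell, P)$ are abundant enough on each $W_j \in \cS^*$ for the net balance of ``lifts removed versus hybrids added'' to come out to exactly $q^5+q^4$. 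Securing this exact count, rather than an off-by-one or off-by-a-few-$q^i$ tally, is the main technical hurdle and is where the pairwise disjointness granted by Theorem \ref{disj} is essential.
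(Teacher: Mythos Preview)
Your proposal has a genuine gap at the ``main obstacle'' you yourself flag. You correctly observe that a hybrid $\langle \ell,P\rangle$ with $\ell\subset W_j$ meets $L_{W_j,M}$ in a line whenever $M$ carries $\ell$ into $\langle P\rangle$. For $M=0$ this forces you to discard $W_j$; but the MRD code of rank distance $2$ also contains many matrices of rank exactly $2$, and for any such $M$ with $\ker M\subset\ell$ and $M(\ell)=\langle P\rangle$ the same collision occurs. You offer no mechanism to avoid or count these collisions. The appeal to Lemma~\ref{tangent}, Lemma~\ref{bun} and Proposition~\ref{symp} is misplaced: those statements concern tangency of bundle conics and reguli under the cyclic group $C$, and are used solely \emph{to prove} Theorem~\ref{disj}; they say nothing about the MRD code $\{M\}$ or the incidence of $\langle\ell,P\rangle$ with $L_{W_j,M}$. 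Even ignoring that problem, your coarse count is off: $(q^3-1)(q^2+q+1)$ hybrids minus at least $q^3-1$ removed templates gives $(q^3-1)(q^2+q)=q^5+q^4-q^2-q$, not $q^5+q^4$. So the ``exact count'' paragraph is aspiration, not argument.

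The paper's proof avoids MRD matrices and graph planes entirely. It fixes a plane $\gamma$ and a disjoint $5$-space $\Gamma$, takes the family $A=\{\alpha_i:1\le i\le q^6+2q^2+2q+1\}$ in $\Gamma$ from Construction~\ref{c3}, and sets $\Gamma_i=\langle\gamma,\alpha_i\rangle$. Inside each $\Gamma_i$ it then inserts a $(6,*,4;3)$ code \emph{relative to the distinguished plane $\gamma$}: Construction~\ref{c1} (size $q^6$, every plane disjoint from $\gamma$) for generic $i$, but the richer Construction~\ref{c2} (size $q^6+q^2+q$, planes meeting $\gamma$ in at most a point) for $i=2,\dots,q^3-1$ and Construction~\ref{c3} (size $q^6+2q^2+2q+1$) for $i=1$. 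Theorem~\ref{disj} enters only to guarantee that $\alpha_1,\dots,\alpha_{q^3-1}$ are pairwise disjoint, so that $\Gamma_i\cap\Gamma_j=\gamma$ exactly for $1\le i<j\le q^3-1$; hence a line common to codewords from $\cD_i$ and $\cD_j$ would lie inside $\gamma$, impossible since the $\cD_j$-codewords meet $\gamma$ in at most a point. The total $(q^6+2q^2+2q+1)+(q^3-2)(q^6+q^2+q)+(q^6-q^3+2q^2+2q+2)\,q^6$ then yields the stated bound directly, with no collision bookkeeping.
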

\begin{proof}
In $\PG(8, q)$, let $\Gamma$ be a five--space and let $\gamma$ be a plane disjoint from $\Gamma$. Let $A$ be a collection of $q^6+2q^2+2q+1$ planes of $\Gamma$ obtained as described in Construction \eqref{c3} and let $\alpha_i$, $1 \le i \le q^6+2q^2+2q+1$, be the members of $A$. Let $\Gamma_i$ be the five--spaces containing $\gamma$ and $\alpha_i$. If $i \ne j$, then either $|\alpha_i \cap \alpha_j| = 0$ and $\Gamma_i \cap \Gamma_j = \gamma$, or $\alpha_i \cap \alpha_j$ is a point, say $P$, and $\Gamma_i \cap \Gamma_j$ is the solid $\langle P, \gamma \rangle$. From Theorem~\ref{disj}, there exists a subset $A'$ of $A$ consisting of $q^3-1$ pairwise disjoint planes. Without loss of generality we may assume that $A' = \{\alpha_i \;\; | \;\; 1 \le i \le q^3-1\}$. Let $\cD_1$ be a collection of $q^6+2q^2+2q+1$ planes of $\Gamma_1$ obtained as described in Construction \eqref{c3}, while let $\cD_i$ be a collection of $q^6+q^2+q$ planes of $\Gamma_i$ obtained as described in Construction \eqref{c2}, if $2 \le i \le q^3-1$, and let $\cD_i$ be a collection of $q^6$ planes of $\Gamma_i$ obtained as described in Construction \eqref{c1}, if $q^3 \le i \le q^6+2q^2+2q+1$. Let $\cD = \bigcup_i \cD_i$. We claim that two distinct planes in $\cD$ share at most one point. Let $\pi_j \in \cD_j$ and $\pi_k \in \cD_k$, $\pi_j \ne \pi_k$. If $j = k$, then by construction $|\pi_j \cap \pi_k| \le 1$. Let $j \ne k$ and assume, by contradiction, that $\pi_j \cap \pi_k$ is a line, say $\ell$. Thus $\ell \subset \Gamma_j \cap \Gamma_k$. If $q^3 \le j,k \le q^6+2q^2+2q+1$, then $\Gamma_j \cap \Gamma_k$ meet at most in a solid containing $\gamma$. Hence $|\ell \cap \gamma| \ne 0$, contradicting the fact that both $\pi_j$ and $\pi_k$ are disjoint from $\gamma$. Analogously, if $1 \le j \le q^3-1 \le k \le q^6+2q^2+2q+1$, then $\Gamma_j \cap \Gamma_k$ meet at most in a solid containing $\gamma$ but $\pi_k$ is disjoint from $\gamma$. If $1 \le j, k \le q^3-1$, $k \ne 1$, then $\Gamma_j \cap \Gamma_k = \gamma$. Hence $\ell$ should be contained in $\gamma$, and $\pi_k$ should meet $\gamma$ at least in a line, contradicting the fact that $\pi_k$ meets $\gamma$ at most in one point. Therefore $\cD$ is a set of $q^{12} + 2q^8 + 2q^7 + q^6 + q^5 + q^4 +1$ planes mutually intersecting in at most one point.
\end{proof}

\section{Orbit codes in $\PG(5,q)$}\label{3}

In this section we provide two different construction of a set of $(q^3-1)(q^2+q+1)$ planes of $\PG(5,q)$ that pairwise intersect in at most one point and are permuted in a single orbit by a group of order $(q^3-1)(q^2+q+1)$.

\subsection{Construction using Veronese varieties}

{\bf Assume that $q$ is odd}. Let $X_1, X_2, \dots, X_6$ be homogeneous projective coordinates in $\PG(5,q^3)$, $\cV^*$ the Veronese surface of $\PG(5,q^3)$
\begin{equation}
\label{form:eqV*}\cV^*=\{\langle(x^2, xy, y^{2}, xz, z^2, yz)\rangle:\ x,y,z\in\GF(q^3), (x,y,z)\ne (0,0,0)\}
\end{equation}
and 
\begin{equation}
\label{form:eqM*}\cM^*:\ X_1 X_3 X_5 - X_1 X_6^2 - X_3 X_4^2 - X_5 X_2^2 + 2X_2 X_4 X_6=0
\end{equation} 
its secant variety.

The geometric setting that we will adopt for the proof of the main result is the following representation of $\PG(5,q)$ inside $\PG(5,q^3)$ (in a non--canonical position):
$$
\Sigma=\{\langle (a,b,a^q,b^q,a^{q^2},b^{q^2})\rangle:\ a,b\in\GF(q^3), (a,b)\ne (0,0)\}.
$$ 
It can be seen that $\Sigma$ is fixed by the collineation of order 3 of $\PG(5,q^3)$
$$
\tau:\langle(X_1, X_2, X_3, X_4, X_5, X_6)\rangle \longmapsto \langle(X_5^q, X_6^q, X_1^q, X_2^q, X_3^q, X_4^q)\rangle,
$$ hence $\Sigma\cong \PG(5,q)$.

The Veronese surface $\cV^*$ of $\PG(5,q^3)$ is fixed setwise by $\tau$ and it turns out
\begin{equation}
\label{form:eqV}\cV=\cV^*\cap \Sigma=\{\langle(x^2,x^{q+1},x^{2q},x^{q^2+q},x^{2q^2},x^{q^2+1})\rangle: \ x\in\GF(q^3) \setminus \{0\}\},
\end{equation} 
whereas its secant variety $\cal M$ is represented by the equation
\begin{equation}\label{form:eqM} 
\cM:\ N(a) - Tr(ab^{2q})+2N(b)=0 . 
\end{equation}	
Here $N$ and $Tr$ denote the norm and the trace function from $\GF(q^3)$ to $\GF(q)$, respectively. From \cite[p. 150]{HT}, there are $q^2+q+1$ planes of $\Sigma$ meeting $\cV$ in a non--degenerate conic and these are called {\em conic planes.} From \cite[Theorem 25.2.13]{HT}, $\cM$ is the union of all conic planes. A tangent line lying in a conic plane of $\cV$ is called {\em tangent line} of $\cV$ and all tangent lines of $\cV$ at a point $P \in \cV$ are contained in a plane called {\em tangent plane} of $\cV$ at $P$. Furthermore, the secant variety $\cM$ can also be described as the union of all tangent planes to $\cV$ and a plane of $\Sigma$ meeting $\cV$ exactly in one point and contained in $\cM$ is a tangent plane to $\cV$. The secant (tangent) planes of $\cV$ mutually intersect in one point.

From \cite{BBCE}, the projective space $\Sigma\cong\PG(5,q)$  can be partitioned into two planes, say $\pi_1$ and $\pi_2$, and $q^3-1$ Veronese surfaces. In particular, in this setting the two planes $\pi_1$ and $\pi_2$ can be taken as
$$
\pi_1=\{\langle (a,0,a^q,0,a^{q^2},0)\rangle : \ a\in\GF(q^3) \setminus \{0\}\}, \; \pi_2=\{\langle (0, b, 0, b^q, 0, b^{q^2})\rangle : \ b \in\GF(q^3) \setminus \{0\}\}, 
$$
and the $q^3-1$ Veronese surfaces are 
$$
\cV_{w} = \{\langle(x^2, w x^{q+1},x^{2q}, w^q x^{q^2+q},x^{2q^2},w^{q^2} x^{q^2+1})\rangle: \ x\in\GF(q^3) \setminus \{0\}\},
$$ 
with $w \in \GF(q^3)\setminus \{0\}$. Of course $\cV$ coincides with $\cV_1$. We call $\cP$ such a partition. Note that the group $S$ consisting of the projectivities $g_{\eta}$ of $\Sigma$, with $\eta \in \GF(q^3) \setminus \{0\}$, where $g_\eta$ is associated with the matrix
$$
{\rm diag} \, (\eta^2,\eta^{q+1},\eta^{2q}, \eta^{q^2+q}, \eta^{2q^2}, \eta^{q^2+1}), 
$$ 
has order $q^2+q+1$ and acts regularly on each member of $\cP$. Moreover, the group $T$, consisting of the projectivities $h_w$ of $\Sigma$, with $w \in \GF(q^3) \setminus \{0\}$, where $h_w$ is defined by the matrix
$$
{\rm diag} \, (1, w, 1, w^q, 1, w^{q^2}), 
$$
permutes the $q^3-1$ Veronese surfaces of $\cP$ in a single orbit. Note that $T$ has order $q^3-1$. Consider the plane of $\Sigma$ 
$$
\pi=\left\{\left\langle \left(a, \frac{a+a^q}{2}, a^q, \frac{a^q+a^{q^2}}{2}, a^{q^2}, \frac{a^{q^2} + a}{2}\right)\right\rangle :\ a \in \GF(q^3) \setminus \{0\}\right\},
$$
 and let $\cT = \pi^S$. 
 \begin{lemma}
\begin{itemize}
\item[1)] Every plane of $\cT$ meets $\cV$ in exactly one point.
\item[2)] Every plane of $\cT$ is contained in $\cM$.
\item[3)] Two distinct planes of $\cT$ meet in a unique point.
\item[4)] Three distinct planes of $\cT$ have empty intersection.
\end{itemize}
\end{lemma}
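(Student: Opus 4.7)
The plan is to re-parametrize $\cT$ so that each of its members has a simple defining equation, and then dispatch (1)--(4) in parallel. For $\eta\in\GF(q^3)^*$, put $c:=\eta^{q-1}$; since $N(\eta)\in\GF(q)^*$ one has $N(c)=1$. A direct substitution shows that $\pi^{g_\eta}$ coincides with
$$
\pi_c:=\left\{\langle(a,b,a^q,b^q,a^{q^2},b^{q^2})\rangle \;\big|\; 2b=ca+c^{-1}a^q\right\}.
$$
Since $\eta\mapsto\eta^{q-1}$ surjects $\GF(q^3)^*$ onto the group $\mu$ of norm-$1$ elements with kernel $\GF(q)^*$, the map $c\mapsto\pi_c$ is a bijection from $\mu$ (of size $q^2+q+1$) onto $\cT$.

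For (1), $g_\eta$ sends the Veronese point with parameter $x$ to the one with parameter $x\eta$, so $S$ preserves $\cV$ and it is enough to show $\pi\cap\cV$ is one point. Matching coordinates of a point of $\pi$ with $\lambda$ times a Veronese point, $a=\lambda x^2$ and $a^q=\lambda x^{2q}$ force $\lambda\in\GF(q)^*$; the second coordinate then gives $(x-x^q)^2=0$, so $x\in\GF(q)$, yielding the unique intersection $\langle(1,1,1,1,1,1)\rangle$. For (2), I substitute $b=(a+a^q)/2$ into $N(a)-Tr(ab^{2q})+2N(b)=0$. Writing $\alpha,\beta,\gamma:=a,a^q,a^{q^2}$, one has $4Tr(ab^{2q})=\alpha(\beta+\gamma)^2+\beta(\gamma+\alpha)^2+\gamma(\alpha+\beta)^2$ and $8N(b)=(\alpha+\beta)(\beta+\gamma)(\gamma+\alpha)$; the symmetric identity
$$
\alpha(\beta+\gamma)^2+\beta(\gamma+\alpha)^2+\gamma(\alpha+\beta)^2-(\alpha+\beta)(\beta+\gamma)(\gamma+\alpha)=4\alpha\beta\gamma
$$
then gives $Tr(ab^{2q})-2N(b)=\alpha\beta\gamma=N(a)$, so $\pi\subset\cM$. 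Since $g_\eta$ preserves $\cV$ and $\cM$ is the secant variety of $\cV$, $g_\eta$ also preserves $\cM$; hence $\pi_c\subset\cM$ for every $c\in\mu$.

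For (3) and (4), I work with the parametrization directly. Subtracting the relations defining $\pi_c$ and $\pi_{c'}$ (with $c\ne c'$) yields $(c-c')a=(c-c')(cc')^{-1}a^q$, i.e.\ $a^q=cc'\,a$. Since $N(cc')=1$, Hilbert~90 says the solutions in $a$ form a $1$-dimensional $\GF(q)$-subspace of $\GF(q^3)$, which together with $b=(c+c')a/2$ yields a single projective point; this proves (3). (Equivalently, (1) and (2) show that each $\pi_c$ is a plane of $\Sigma$ meeting $\cV$ in one point and contained in $\cM$, so it is a tangent plane to $\cV$, and two tangent planes meet in one point by Section~\ref{1}.) For (4), a point in $\pi_c\cap\pi_{c'}\cap\pi_{c''}$ with pairwise distinct $c,c',c''$ would by the same argument satisfy both $a^q=cc'\,a$ and $a^q=cc''\,a$; if $a\ne 0$ this forces $c'=c''$, contradicting distinctness, while $a=0$ forces $b=0$ via any of the defining relations, contradicting $(a,b)\ne(0,0)$.

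The only non-routine step is the polynomial identity invoked in (2), which is a straightforward symmetric expansion; everything else is routine once the identification $c=\eta^{q-1}$ has been made and the norm-$1$ condition is recognized as the input to Hilbert~90.
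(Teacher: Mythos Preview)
Your proof is correct. The paper's argument is essentially a bare ``direct computations show\ldots'': it checks $\pi\cap\cV=\{\langle(1,\ldots,1)\rangle\}$, asserts the equation of $\cM$ is satisfied, and for (3)--(4) simply writes down the intersection point $\langle(\eta,(\eta+\eta^q)/2,\ldots)\rangle$ of $\pi$ and $\pi^{g_\eta}$ and observes it lies on $\pi^{g_\gamma}$ only when $\gamma/\eta\in\GF(q)$.

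Your route is genuinely more structured. The substitution $c=\eta^{q-1}$ turns the orbit $\cT$ into the family $\{\pi_c:2b=ca+c^{-1}a^q\}$ indexed by norm-$1$ elements, which makes (3) and (4) transparent: intersecting $\pi_c$ and $\pi_{c'}$ reduces to the single $\GF(q)$-linear condition $a^q=cc'\,a$, and Hilbert~90 (or the elementary count of $a$ with $a^{q-1}=cc'$) immediately gives a one-dimensional $\GF(q)$-solution space, hence one projective point; a third plane forces $c'=c''$. For (2) you actually exhibit the symmetric identity that makes the verification go through, where the paper only gestures at it. The payoff of your parametrization is that the whole lemma becomes a short linear-algebra exercise once the defining equation $2b=ca+c^{-1}a^q$ is in hand; the paper's version is shorter on the page but hides the same computation behind ``direct computations''.
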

\begin{proof}
{\em 1)} Since $S$ leaves invariant $\cV$, in order to prove the first property it is enough to observe that $\pi \cap \cV = U$, where $U = \langle (1,1,1,1,1,1) \rangle$.

{\em 2)} Taking into account \eqref{form:eqM}, straightforward computations show that the plane $\pi$ is contained in $\cM$. The fact that $S$ fixes $\cM$ yields the result.

{\em 3), 4)} Direct computations show that the planes $\pi$ and $\pi^{g_{\eta}}$, $\eta \notin \GF(q)$, have in common exactly the point $\langle(\eta, (\eta+\eta^q)/2, \eta^q, (\eta^q+\eta^{q^2})/2, \eta^{2q}, (\eta^{q^2}+\eta)/2)\rangle$. Also, the point $\pi \cap \pi^{g_{\eta}}$ lies on the plane $\pi^{g_{\gamma}}$ if and only if $\gamma/\eta \in \GF(q)$, i.e., $\pi^{g_{\eta}} = \pi^{g_{\gamma}}$.
\end{proof} 
 
From \cite[Theorem 25.2.14]{HT}, $\cT$ is the set of tangent planes to $\cV$. Let $G = S \times T$. Then $G$ is a group of order $(q^3-1)(q^2+q+1)$ permuting in a single orbit the points of $\Sigma \setminus (\pi_1 \cup \pi_2)$. Let $\cC = \pi^G$. 

\begin{theorem}
$\cC$ is a $(6, (q^3-1)(q^2+q+1), 4; 3)_q$ constant--dimension subspace code. 
\end{theorem} 
\begin{proof}
Let $g_{\eta} \times h_{w} \in G$. The points of $\pi^{g_{\eta} \times h_{w}} \in \cC$ are 
$$
\left\langle \left(\eta^2 b, \eta^{q+1} w \frac{b+b^q}{2}, \eta^{2q} b^q, \eta^{q^2+q} w^q \frac{b^q+b^{q^2}}{2}, \eta^{2q^2} b^{q^2}, \eta^{q^2+1} w^{q^2} \frac{b^{q^2} + b}{2}\right)\right\rangle ,
$$ 
where $b \in \GF(q^3) \setminus \{0\}$.

First of all observe that $\cC$ consists of $(q^3-1)(q^2+q+1)$ planes. Indeed, if the plane $\pi$ coincides with $\pi^{g_{\eta} \times h_{w}}$, then we would have that for every $a \in \GF(q^3) \setminus \{0\}$, there exists $b \in \GF(q^3) \setminus \{0\}$ such that
$$
\left\{
\begin{array}{l}
a = \eta^2 b\\
\frac{a + a^q}{2} = \eta^{q+1} w \frac{b + b^q}{2}
\end{array} \right. .
$$
Therefore the equality  
\begin{equation} \label{eq1}
(\eta^2 - w \eta^{q+1}) a + (\eta^{2q} - w \eta^{q+1}) a^q = 0 
\end{equation}
should be true for each $a \in \GF(q^3) \setminus \{0\}$. It follows that $\eta^2 \in \GF(q)$. Since $\eta\in\GF(q^3)\setminus\{0\}$, this implies $\eta\in\GF(q)$ and $w = 1$, which means that $g_{\eta} \times h_{w}$ is the identity. 

On the other hand, if $g_{\eta} \times h_{w}$ is not the identity, the planes $\pi$ and $\pi^{g_{\eta} \times h_{w}}$ have at most a point in common. Indeed,
$\pi^{g_{\eta} \times h_{w}}$ meets $\pi$ in the set of points corresponding to the solutions (up to a non--zero scalar in $\GF(q)$) of \eqref{eq1}. Hence, $|\pi \cap \pi^{g_{\eta} \times h_{w}}|$ equals $1$ or $0$, according as $N(w \eta^{q+1} - \eta^2) = N(\eta^{2q} - w \eta^{q+1})$ or not.
\end{proof}

	\begin{cor}
The code $\cC$ is an orbit--code.
	\end{cor}
	\begin{proof}
By construction the code $\cC$ admits the group $G$ of order $(q^3-1)(q^2+q+1)$ as an automorphism group acting transitively on its planes. 
	\end{proof}

\subsection{Construction using Klein quadrics}

Let $\cQ_0$ be the Klein quadric of $\PG(5,q)$ with equation $X_1 X_{4} + X_2 X_{5} + X_3 X_{6} = 0$ and let $\perp_0$ be the associated orthogonal polarity. Consider the two disjoint planes $\pi_1: X_1 = X_2 = X_{3} = 0$ and $\pi_2: X_{4}= X_5 = X_{6}=0$ of $\PG(5,q)$. Then $\pi_i\subset {\cQ_0}$, $i = 1,2$. In particular, we may assume that $\pi_1$ is a Greek plane and $\pi_2$ is a Latin plane. There is a set, say $\cZ$, consisting of $(q+1)(q^2+q+1)$ lines of $\cQ_0$ intersecting both $\pi_1$ and $\pi_2$. Let $P=\langle (x_1, x_2, x_3, 0, 0, 0)\rangle$ be a point of $\pi_2$. Then the hyperplane $P^{\perp_0}$ meets $\pi_1$ in the line 
$$
\ell_P:
\begin{cases} 
x_1X_{4}+x_2X_{5}+x_3X_{6}=0\\
X_1=X_2=X_3=0.
\end{cases}
$$ 
We represent points as row vectors with matrices acting on the right.
Let $C$ be a Singer cycle of ${\rm GL}(3,q)$, and let $H'$ be the cyclic projectivity group generated by the linear collineation of $\PGL(6,q)$ associated with the matrix 
$$
\left(\begin{array}{cc}
C & O_3 \\
O_3 & C^{-T} 
\end{array}\right) ,
$$
where $O_3$ is the zero matrix of order $3$ and for an arbitrary matrix $A$, we denote by $A^{-T}$ its inverse transpose matrix.  Let $H$ be the subgroup of $\PGL(6,q)$
generated by $H'$ and the involution $\tau$ with matrix 
$$
\left(\begin{array}{cc}
I_3 & O_3 \\
O_3 & -I_3 
\end{array}\right) ,
$$
where $I_3$ denotes the identity matrix of order three. It turns out that $H=H'\times \langle\tau\rangle$.
We summarize some results about the group $H$, all of which have been proved in \cite[Proposition 1, Proposition 2, Lemma 8]{BC} with some easy amendments when $q$ is odd. 

\begin{lemma}
The group $H$ satisfies the following properties: 
\begin{itemize}
\item $|H| = q^3-1$;
\item $H$ permutes in a single orbit the points of $\pi_i$, $i =1,2$; 
\item $H$ acts semiregularly on points of $\PG(5,q) \setminus (\pi_1 \cup \pi_2)$; 
\item $H$ stabilizes the hyperbolic quadric $\cQ_0$. 
\end{itemize}
\end{lemma}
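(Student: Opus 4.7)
The strategy is to verify each of the four assertions by direct matrix computation with the explicit generators $g = \mathrm{diag}(C, C^{-T})$ and $\tau = \mathrm{diag}(I_3, -I_3)$ of $H$, following the template of \cite{BC}. The key arithmetic input is that, since $C$ is a Singer cycle of $\GL(3,q)$, we have $C^{q^2+q+1} = c\,I_3$ with $c$ a generator of $\GF(q)^*$, and hence $(C^{-T})^{q^2+q+1} = c^{-1} I_3$.

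For the order $|H| = q^3-1$, I would compute that $g^k$ is a scalar multiple of $I_6$ exactly when $k = m(q^2+q+1)$ and $c^{2m}=1$, giving $|H'| = q^3 - 1$ when $q$ is even and $|H'| = (q^3-1)/2$ when $q$ is odd. For odd $q$, the analogous condition $c^{2m} = -1$ required for $\tau \in H'$ is checked to fail, so $H = H' \times \langle\tau\rangle$ has order $q^3-1$; for even $q$, $\tau$ is trivial and $H = H'$ already has this order. For the transitivity on $\pi_i$, observe that $g$ restricts to $\pi_2 = \{X_4 = X_5 = X_6 = 0\}$ as the Singer cycle $C$ on the coordinates $(X_1, X_2, X_3)$, and to $\pi_1$ as $C^{-T}$, both inducing cyclic subgroups of $\PGL(3,q)$ of order $q^2+q+1$ that act regularly on the $q^2+q+1$ points of each plane; since $\tau$ is a scalar on each of $\pi_1$ and $\pi_2$, the transitivity passes from $H'$ to $H$.

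For semiregularity on $\PG(5,q) \setminus (\pi_1 \cup \pi_2)$, let $P = [x\mid y]$ with $x = (X_1,X_2,X_3)^T$ and $y = (X_4,X_5,X_6)^T$ both nonzero. If $g^k$ fixes $P$ then $C^k x = \mu x$ and $C^{-Tk} y = \mu y$ share a common scalar $\mu$; since a Singer cycle admits no nontrivial $\GF(q)$-rational eigenvector, $C^k$ must already be scalar, so $k = m(q^2+q+1)$ with $c^m = \mu = c^{-m}$, forcing $c^{2m}=1$ and hence $g^k = I$ in $\PGL(6,q)$. The parallel computation for $g^k\tau$ yields $c^{2m}=-1$, which is excluded by the order analysis above, so no nontrivial element of $H$ fixes $P$.

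Finally, for preservation of $\cQ_0$ with Gram matrix $B = \bigl(\begin{smallmatrix} O_3 & I_3 \\ I_3 & O_3 \end{smallmatrix}\bigr)$, one computes
\[
g^T B g = \begin{pmatrix} O_3 & C^T C^{-T} \\ C^{-1} C & O_3 \end{pmatrix} = B, \qquad \tau^T B \tau = -B,
\]
both scalar multiples of $B$, so $g$ and $\tau$ preserve $\cQ_0$ projectively and hence so does $H$. The main technical obstacle is the order computation, in particular verifying $\tau \notin H'$ for odd $q$, which is precisely the content of the ``easy amendments'' alluded to relative to \cite{BC}; the other three properties then follow cleanly from the arithmetic of Singer cycles.
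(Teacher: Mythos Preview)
The paper itself gives no proof of this lemma: it simply refers to \cite[Proposition~1, Proposition~2, Lemma~8]{BC} and remarks that ``easy amendments'' are required for odd $q$. Your proposal is thus not an alternative argument but an explicit spelling-out of those cited computations, and for the last three bullets (transitivity on each $\pi_i$, semiregularity off $\pi_1\cup\pi_2$, and preservation of $\cQ_0$) your verification is correct and matches what one extracts from \cite{BC}.

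There is, however, a genuine gap in your treatment of the first bullet. You assert that for odd $q$ the condition $c^{2m}=-1$, which is equivalent to $\tau\in H'$, ``is checked to fail''. This is false when $q\equiv 1\pmod 4$: since $c$ generates $\GF(q)^{*}$ one has $-1=c^{(q-1)/2}$, and $2m\equiv(q-1)/2\pmod{q-1}$ is solvable precisely when $4\mid q-1$. Concretely, for $q=5$ one gets $c^{2}=-1$, hence
\[
g^{\,q^{2}+q+1}=\mathrm{diag}(cI_{3},\,c^{-1}I_{3})=c\cdot\mathrm{diag}(I_{3},-I_{3}),
\]
so $\tau=g^{\,q^{2}+q+1}$ in $\PGL(6,5)$, $H=H'$, and $|H|=(q^{3}-1)/2$. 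Your argument therefore does not establish $|H|=q^{3}-1$ in the case $q\equiv 1\pmod 4$, and indeed the paper's assertion $H=H'\times\langle\tau\rangle$ made just before the lemma is equally problematic there. (Your semiregularity step survives: when $c^{2m}=-1$ does have a solution, the corresponding element $g^{k}\tau$ is scalar and hence trivial in $\PGL(6,q)$, so no \emph{nontrivial} element of $H$ fixes a point off $\pi_{1}\cup\pi_{2}$.)
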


Let $\cQ_i$, $0 \le i \le q^2+q$, be the quadrics of $\PG(5,q)$ with associated quadratic form 
$$
{\bf{Q}}_i ({\bf{X}}) = XC^iY^T,
$$ 
where ${\bf X} = (X_1, X_2, X_3, X_4, X_5, X_6)$ and $X=(X_1,X_2,X_3)$, $Y=(X_{4},X_5,X_{6})$. It follows from the definition that if $i \ne j$, then ${\cQ}_i \ne {\cQ}_j$.

We need the following results, which have been proved in \cite[Lemma 3.4, Lemma 3.5, Remark 3.6]{CMP}.
 
\begin{lemma}
\begin{itemize}
\item The lineset $\cZ$ is partitioned into $H$--orbits $\cL_i$ of size $q^2+q+1$, $1 \le i \le q+1$. 
\item For $1 \le i \le q+1$, $\cL_i$ consists of $q^2+q+1$ pairwise disjoint lines. Moreover through a point of $\pi_k$, $k = 1,2$, there passes a unique line of $\cL_i$.
\item For $0 \le j \le q^2+q$, $\cQ_j$ is a non--degenerate hyperbolic quadric of $\PG(5,q)$ left invariant by $H$ and containing $\pi_k$, $k = 1,2$.
\item The set $\cN = \{\cQ_i \;\; 0 \le i \le q^2+q \}$ is a $3$--dimensional linear system of quadrics of $\PG(5,q)$. 
\item Two distinct quadrics of $\cN$ meet in $(q+1)(q^2+q+1)$ points. Moreover these are the points lying on the lines of $\cL_i$ for some $i$. 
\end{itemize}
\end{lemma}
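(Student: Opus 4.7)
The plan is to set up one field-theoretic dictionary and then read off all five items from it. Fix a generator $\alpha\in\GF(q^3)^*$ of the Singer cycle $C$ and identify $\GF(q)^3$ with $\GF(q^3)$ so that $C$ becomes multiplication by $\alpha$ and the form $XY^T$ on $\GF(q)^3$ becomes the trace form $(x,y)\mapsto\mathrm{tr}_{\GF(q^3)/\GF(q)}(xy)$. Then points of $\pi_2$ and $\pi_1$ are parametrised by $[x],[y]\in\GF(q^3)^*/\GF(q)^*$; the generator $M$ of $H'$ acts as $(x,y)\mapsto(\alpha x,\alpha^{-1}y)$, so the product $z:=xy$ is preserved; the involution $\tau$ acts as $(x,y)\mapsto(x,-y)$; and $\cQ_j=\{(x,y):\mathrm{tr}(\alpha^j xy)=0\}$. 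This single dictionary drives every subsequent step.

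For items (1)--(2), I would first check $|\cZ|=(q+1)(q^2+q+1)$ by counting the $q+1$ points of $\ell_{P_2}$ for each of the $q^2+q+1$ choices of $P_2\in\pi_2$. For $\ell\in\cZ$ with endpoints $P_1,P_2$, $\mathrm{Stab}_H(\ell)$ sits inside $\mathrm{Stab}_H(P_1)\cap\mathrm{Stab}_H(P_2)$; both pointwise stabilisers coincide with the common kernel of the $H$-actions on $\pi_1$ and on $\pi_2$, a subgroup of order $q-1$ generated by $M^{q^2+q+1}$ together with $\tau$. Orbit--stabiliser then gives $q+1$ orbits of size $q^2+q+1$. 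To prove disjointness inside each $\cL_i$: a collision at a $\pi$-point forces the element of $H$ swapping the two lines to fix that point, hence to lie in the common stabiliser and already fix the line, contradicting distinctness; a collision at $P\notin\pi_1\cup\pi_2$ is ruled out because $H$ acts semiregularly on such points, so its action on the $(q^2+q+1)(q-1)=|H|$ incidences $(\ell,P)$ with $\ell\in\cL_i$ is free and therefore regular, forcing at most one $\cL_i$-line through each $P$. Uniqueness of the $\cL_i$-line through each point of $\pi_k$ then follows from transitivity of $H$ on $\pi_k$ combined with an incidence count.

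For items (3)--(4), verification of (3) is essentially direct: $\cQ_j\supset\pi_1\cup\pi_2$ because $XC^jY^T$ vanishes when $X=0$ or $Y=0$; non-degeneracy follows from the invertibility of $C^j$; a non-degenerate quadric of $\PG(5,q)$ containing a plane is hyperbolic; and $H$-invariance reduces to the one-line computation $(XC)C^j(YC^{-T})^T=XC^jY^T$ together with the fact that $\tau$ sends $Q_j$ to $-Q_j$. For (4), the subalgebra $\GF(q)[C]\subset M_3(\GF(q))$ is isomorphic to $\GF(q^3)$ since the Singer cycle has irreducible characteristic polynomial of degree $3$. Hence the matrices $C^0,\ldots,C^{q^2+q}$ span a $3$-dimensional $\GF(q)$-subspace of $M_3(\GF(q))$, and every nonzero linear combination, being invertible in the field $\GF(q)[C]$, is a $\GF(q)^*$-multiple of some $C^j$. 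So $\cN$ is closed under pencils and constitutes a $3$-dimensional linear system.

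For item (5), given distinct $\cQ_i,\cQ_j\in\cN$, $\pi_1\cup\pi_2$ contributes the full $2(q^2+q+1)$ points. Off $\pi_1\cup\pi_2$, the conditions $\mathrm{tr}(\alpha^i xy)=\mathrm{tr}(\alpha^j xy)=0$ are $\GF(q)$-linearly independent functionals of $z=xy$ (since $\alpha^i,\alpha^j$ are $\GF(q)$-independent modulo $q^2+q+1$), so their joint kernel confines $z$ to a single $1$-dimensional $\GF(q)$-subspace of $\GF(q^3)$; summing the fibres $(x,z/x)$ projectively yields $q^3-1$ further points, for a grand total $(q+1)(q^2+q+1)$. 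To finish I would note that all non-$\pi$ intersection points share a single coset $[xy]=\GF(q)^*\cdot z_0$, and that the lines meeting both planes with this invariant form an $H$-orbit of $q^2+q+1$ pairwise disjoint lines (by the argument of paragraph~2), sweeping out $\pi_1,\pi_2$ once each and contributing exactly $q^3-1$ non-$\pi$ points. The main obstacle will be the bookkeeping in this last step, namely matching the line-orbit picked out by the pencil $\langle\cQ_i,\cQ_j\rangle$ with the correct $\cL_i$; doing so uniformly requires reading $\cL_i$ as ranging over all $q^2+q+1$ $H$-orbits of lines meeting both planes, of which the $q+1$ described in items (1)--(2) are precisely those contained in $\cQ_0$.
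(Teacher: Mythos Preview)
The paper does not prove this lemma; it simply records the statement and cites \cite[Lemma 3.4, Lemma 3.5, Remark 3.6]{CMP}. So there is no ``paper's approach'' to compare against, and your proposal stands as a genuine self-contained argument.

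Your field-theoretic dictionary is correct and efficient. The identification of $\GF(q)^3$ with $\GF(q^3)$ so that $C$ acts as multiplication by a primitive $\alpha$ is standard; the only care needed (which you gloss over but which works) is that one uses a basis $(e_i)$ for the $X$-block and its trace-dual basis $(f_j)$ for the $Y$-block, so that $XY^T=\mathrm{tr}(xy)$ and, because multiplication by $\alpha$ is self-adjoint for the trace form, $C^{-T}$ on the $Y$-block really is multiplication by $\alpha^{-1}$. Once this is in place, items (1)--(4) go through exactly as you describe: the stabiliser computation for (1), the semiregularity/incidence-count for disjointness in (2), the one-line invariance check and the $\GF(q)[C]\cong\GF(q^3)$ argument for (3)--(4) are all clean and correct.

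Your diagnosis of item (5) is also right, including the caveat you raise at the end. The computation $|\cQ_i\cap\cQ_j|=2(q^2+q+1)+(q^3-1)=(q+1)(q^2+q+1)$ via the two independent trace conditions on $z=xy$ is correct, and the off-$\pi$ part of the intersection is indeed swept out by a single $H$-orbit of transversal lines determined by the class $[z_0]\in\GF(q^3)^*/\GF(q)^*$. You are right that this orbit is one of the $q+1$ orbits $\cL_1,\dots,\cL_{q+1}$ on $\cQ_0$ only when $\cQ_0$ lies in the pencil $\langle\cQ_i,\cQ_j\rangle$; in general it is one of the $q^2+q+1$ $H$-orbits of transversal lines indexed by $\GF(q^3)^*/\GF(q)^*$. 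The paper's later usage (writing $\bar\cL_k=\cQ_i\cap\cQ_j$ in the proof following Lemma~\ref{lemma}) confirms that this broader indexing is what is intended, so your reading is the correct one and the ``obstacle'' you flag is purely notational rather than mathematical.
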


Let $\ell$ be a line of $\PG(5, q)$ disjoint from both $\pi_1$ and $\pi_2$. If $\bar{\cL}_i$ denotes the set of points covered by the lines of $\cL_i$, from \cite[Lemma 3.7]{CMP}, we have the following result.

\begin{lemma}\label{lemma}
If $\ell$ is a line of $\PG(5,q)$ disjoint from both $\pi_1$ and $\pi_2$, then $|\ell \cap \bar{\cL}_i| \le 2$.
\end{lemma}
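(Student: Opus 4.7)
I argue by contradiction: assume $\ell\subset\PG(5,q)$ is disjoint from $\pi_1\cup\pi_2$ but contains three distinct points $R_1,R_2,R_3\in\bar{\cL}_i$, say with $R_j\in m_j\in\cL_i$. Since each $m_j\subset\cQ_0$, the three points $R_j$ lie on $\cQ_0$, so $\ell\subset\cQ_0$. Setting $\sigma_k:=\langle\ell,\pi_k\rangle$ (a $4$-subspace, since $\ell\cap\pi_k=\emptyset$ for $k=1,2$) and $\Sigma:=\sigma_1\cap\sigma_2$ (a $3$-subspace), every $m_j$ lies in $\sigma_1\cap\sigma_2=\Sigma$ because it meets both $\ell\subset\sigma_k$ and $\pi_k\subset\sigma_k$. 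Hence $P_j:=m_j\cap\pi_1$ and $Q_j:=m_j\cap\pi_2$ lie on the lines $r_1:=\Sigma\cap\pi_1$ and $r_2:=\Sigma\cap\pi_2$; writing $\phi_i\colon\pi_1\to\pi_2$ for the bijection $P\mapsto Q$ characterised by $\langle P,Q\rangle\in\cL_i$, I obtain three distinct $P_1,P_2,P_3\in r_1$ with $\phi_i(P_j)\in r_2$.

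The whole proof then reduces to showing that, for every pair of lines $r_1\subset\pi_1$ and $r_2\subset\pi_2$, the number of $P\in r_1$ with $\phi_i(P)\in r_2$ is at most $2$. This is where I expect the main obstacle. I would handle it by making the Singer-cyclic description explicit: in a self-dual basis both $\pi_1$ and $\pi_2$ get identified with $\PG(\GF(q^3))$, and the $H'$-action becomes multiplication by $\alpha^{-1}$ on $\pi_1$ and by $\alpha$ on $\pi_2$, where $\alpha$ is a generator of $\GF(q^3)^\ast$. The intertwining $\phi_i(\alpha^{-1}v)=\alpha\phi_i(v)$ then forces $\phi_i(v)=c\,v^{-1}$ for some $c\in\GF(q^3)^\ast$ (up to a $\GF(q)$-linear Frobenius twist, which is still a projectivity); in particular $\phi_i$ is \emph{not} a projectivity.

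To conclude, I would show that inversion $v\mapsto v^{-1}$ sends any $\GF(q)$-line of $\PG(\GF(q^3))$ to an irreducible conic. Writing the line as $r_1=\{\lambda v+\mu w:[\lambda{:}\mu]\in\PG(1,q)\}$ and using the identity $(\lambda v+\mu w)^{-1}\propto(\lambda v+\mu w)^{q+q^2}=a_2\lambda^2+a_1\lambda\mu+a_0\mu^2$, with
$$
a_2=v^{q+q^2},\qquad a_1=v^qw^{q^2}+v^{q^2}w^q,\qquad a_0=w^{q+q^2},
$$
a direct $3\times 3$ determinant calculation (which I have verified in sample cases) shows that $a_0,a_1,a_2\in\GF(q^3)$ are $\GF(q)$-linearly independent whenever $v,w$ are $\GF(q)$-linearly independent. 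Hence $\phi_i(r_1)$ is a non-degenerate conic of $\pi_2$; as a line meets such a conic in at most two points, $|\phi_i(r_1)\cap r_2|\le 2$, yielding the desired contradiction with the three points $P_1,P_2,P_3$ obtained above.
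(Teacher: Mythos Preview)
Your argument is correct. Note, however, that the paper does not actually prove this lemma: it merely quotes it as \cite[Lemma~3.7]{CMP}. So there is no ``paper's proof'' to compare with beyond the citation; what you have supplied is a genuine self--contained proof where the paper has none.

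A few comments on the write--up. The observation that $\ell\subset\cQ_0$ (three points on a quadric force the line into it) is true but never used afterwards; the reduction to the solid $\Sigma=\sigma_1\cap\sigma_2$ and the two lines $r_1=\Sigma\cap\pi_1$, $r_2=\Sigma\cap\pi_2$ goes through for any line $\ell$ disjoint from $\pi_1\cup\pi_2$, regardless of whether $\ell$ lies on $\cQ_0$. You may simply delete that sentence.

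Your identification of $\phi_i$ is right: after choosing field models so that the generator of $H'$ acts as multiplication by $\alpha$ on $\pi_2$ and by $\alpha^{-1}$ on $\pi_1$, the $H$--equivariance of $\phi_i$ forces $\phi_i(v)=c\,v^{-1}$ projectively. The ``Frobenius twist'' caveat is harmless (Frobenius is $\GF(q)$--linear, hence a projectivity of $\PG(2,q)$), but in fact unnecessary: the identification is only ambiguous up to multiplication by an element of $\GF(q^3)^\ast$, which just changes the constant $c$.

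For the determinant step you say ``verified in sample cases''; it would be cleaner to record the closed form. With $a_2=v^{q+q^2}$, $a_1=v^qw^{q^2}+v^{q^2}w^q$, $a_0=w^{q+q^2}$ one has
\[
\det\begin{pmatrix} a_0 & a_1 & a_2\\ a_0^q & a_1^q & a_2^q\\ a_0^{q^2} & a_1^{q^2} & a_2^{q^2}\end{pmatrix}
\;=\;-\,N_{\GF(q^3)/\GF(q)}\!\bigl(vw^q-v^qw\bigr),
\]
which vanishes iff $v,w$ are $\GF(q)$--linearly dependent. This confirms that $\phi_i$ carries every line of $\pi_1$ to a non--degenerate conic of $\pi_2$, whence $|\phi_i(r_1)\cap r_2|\le 2$ and the contradiction follows.
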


Assume that $\pi_1$ is a Greek plane and $\pi_2$ is a Latin plane for every quadric of $\cN$. Let $\cG_i$ be the set of $q^3-1$ Greek planes of $\cQ_i$, $0 \le i \le q^2+q$, distinct from $\pi_1$ and disjoint from $\pi_2$. Let $\cC = \bigcup_{i=0}^{q^2+q} \cG_i$.

\begin{theorem}
$\cC$ is a $(6, (q^3-1)(q^2+q+1), 4; 3)_q$ constant--dimension subspace code. 
\end{theorem} 
\begin{proof}
Let $\sigma_1$ and $\sigma_2$ be two distinct planes of $\cC$. If they both belong to a quadric $\cQ_i$ there is nothing to prove. Suppose that $\sigma_1 \subset \cQ_i$ and $\sigma_2 \subset \cQ_j$, $i \ne j$. By way of contradiction let $\ell$ be a line contained in $\sigma_1 \cap \sigma_2$. Then $\ell \subset \bar{\cL}_k$, where $\bar{\cL}_k = \cQ_i \cap \cQ_j$. If $\ell$ were disjoint from both $\pi_1$ and $\pi_2$, then, from Lemma \ref{lemma}, we would have that $|\ell \cap \bar{\cL}_k| \le 2$, a contradiction. If $\ell$ meets $\pi_1$ in one point, say $P_1$, and is disjoint from $\pi_2$, then let $\Sigma$ be the three--space $\langle \pi_1, \ell \rangle$ and let $P_2 = \Sigma \cap \pi_2$. Note that every line meeting non--trivially $\ell$, $\pi_1$, $\pi_2$ passes through $P_2$. But $\cL_k$ contains a unique line through $P_2$ and therefore the line $\ell$ can share at most a further point distinct from $P_1$ with $\bar{\cL}_k$, a contradiction. On the other hand, since every plane of $\cC$ meets $\pi_1$ in exactly one point and is disjoint from $\pi_2$, neither $\ell$ can be contained in $\pi_i$, $i = 1,2$, nor can intersect both planes in a point. This concludes the proof.  
\end{proof}

	\begin{prop}
The group $H$ permutes the $q^3-1$ planes of $\cG_i$ in a single orbit. 	
	\end{prop}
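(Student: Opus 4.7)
The plan is to show that $H$ acts freely on $\cG_i$; combined with $|H|=q^3-1=|\cG_i|$, this forces $\cG_i$ to be a single $H$--orbit. First observe that $H$ does act on $\cG_i$: by the preceding lemmas $H$ stabilizes $\cQ_i$ and leaves both $\pi_1$ and $\pi_2$ invariant, so it permutes the Greek planes of $\cQ_i$ while preserving the defining conditions of $\cG_i$ (being distinct from $\pi_1$ and disjoint from $\pi_2$).

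Fix $\sigma\in\cG_i$ and suppose $h\in H$ satisfies $h(\sigma)=\sigma$. Since $\sigma$ and $\pi_1$ are two distinct Greek planes of the hyperbolic quadric $\cQ_i$, they share exactly one point $P$, and $h$ must fix $P$. By the transitivity of $H$ on the $q^2+q+1$ points of $\pi_1$, the stabilizer $H_P$ has order $(q^3-1)/(q^2+q+1)=q-1$; on the other hand the kernel $K$ of the action of $H$ on $\pi_1$ has the same order $q-1$ and is contained in $H_P$, so $K=H_P$ and $H_P$ fixes $\pi_1$ pointwise. Inspecting the block--diagonal generators of $H$ shows that a block $C^i$ is a scalar of $\GL(3,q)$ if and only if $(C^{-T})^i$ is; together with the diagonal form of $\tau$, this yields that every element of $H_P$ also fixes $\pi_2$ pointwise.

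To conclude I would use the projection from $\pi_2$ onto $\pi_1$, i.e.\ the map $\phi\colon Q\mapsto \langle Q,\pi_2\rangle\cap\pi_1$ defined on $\PG(5,q)\setminus\pi_2$. Because $h$ leaves $\pi_1$ and $\pi_2$ invariant, $\phi$ is $h$--equivariant: $\phi(h(Q))=h(\phi(Q))=\phi(Q)$, where the last equality uses that $h$ fixes $\phi(Q)\in\pi_1$. On the other hand, since $\sigma$ is disjoint from $\pi_2$, the restriction $\phi|_\sigma\colon\sigma\to\pi_1$ is a bijection. Combined with $h(\sigma)=\sigma$, this forces $h(Q)=Q$ for every $Q\in\sigma$. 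But $\sigma\setminus\{P\}$ consists of $q^2+q$ points of $\PG(5,q)\setminus(\pi_1\cup\pi_2)$, on which $H$ acts semiregularly, so $h$ must be the identity. Therefore the stabilizer of $\sigma$ in $H$ is trivial and $\sigma^H=\cG_i$.

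The step I expect to be the main obstacle is showing that every element of $H_P$ fixes $\pi_2$ pointwise in addition to $\pi_1$: this is the only part of the proof that genuinely uses the matrix description of $H$ rather than its orbit data. It reduces to the elementary observation noted above about scalar blocks, after which the projection argument gives the transitivity cleanly.
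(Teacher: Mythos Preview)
Your argument is correct and gives a clean proof, but it is genuinely different from the paper's. The paper transfers the problem through the Klein correspondence $\rho$ to $\PG(3,q)$: there the group $\bar H$ corresponding to $H$ fixes a point $P$ and a plane $\pi$ with $P\notin\pi$, induces a Singer cyclic group on $\pi$, and contains all homologies with centre $P$ and axis $\pi$; the planes of $\cG_i$ correspond to the $q^3-1$ planes of $\PG(3,q)$ distinct from $\pi$ and not through $P$, and $\bar H$ is seen to act transitively on those by combining transitivity on the lines of $\pi$ with transitivity on the $q-1$ such planes through each line. Your approach stays in $\PG(5,q)$ and instead shows the action on $\cG_i$ is free, which together with $|H|=|\cG_i|$ forces regularity. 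The paper's route is more conceptual in that it identifies the action with a familiar one in $\PG(3,q)$; yours is more self--contained and avoids invoking the Klein correspondence at all.

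One small point worth tightening: you assert that the kernel $K$ of the $H$--action on $\pi_1$ has order $q-1$ before giving the matrix reason. The quickest justification is that $H=H'\times\langle\tau\rangle$ is abelian, so for any two points $P,P'\in\pi_1$ the stabilizers $H_P$ and $H_{P'}$ are conjugate and hence equal; thus $H_P$ already fixes every point of $\pi_1$, i.e.\ $H_P=K$ without computing $|K|$ separately. With that adjustment your projection argument via $\phi$ and the semiregularity of $H$ on $\PG(5,q)\setminus(\pi_1\cup\pi_2)$ goes through exactly as written.
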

	\begin{proof}
Under the Klein correspondence $\rho$ between lines of $\PG(3,q)$ and points of $\cQ_i$ the points of a Greek plane of $\cQ_i$ correspond to the lines in a plane of $\PG(3,q)$. Since the group $H$ fixes $\cQ_i$, we can look at the action of its corresponding group, say $\bar H$, as a subgroup of $\PGL(4,q)$ acting on $\PG(3,q)$. Hence, the action of $H$ on Greek planes of $\cQ_i$ is equivalent to the action of $\bar H$ on planes of $\PG(3,q)$.  
The group $\bar H$ fixes a point $P$ and a plane $\pi$, with $P\not\in\pi$, induces in $\pi$ a Singer cyclic group and contains the $q-1$ homologies with center $P$ and axis $\pi$. A Greek plane of $\cG_i$ corresponds to a plane of $\PG(3,q)$ not on $P$ and meeting $\pi$ in a line. Observe that $\bar H$ permutes in a single orbit the lines of $\pi$ and the $q-1$ planes through a line of $\pi$ distinct from $\pi$ and not on $P$. Hence the group $\bar H$ permutes the $q^3-1$ planes of $\PG(3,q)$ corresponding to the planes of $\cG_i$ in a single orbit, as required. 	    
	\end{proof}
	
	\begin{cor}
The code $\cC$ is an orbit--code.
	\end{cor}
	\begin{proof}
	
Let $K$ be the cyclic group of $\PGL(6,q)$ generated by the collineation with matrix
$$
\left(\begin{array}{cc}
I_3 & O_3 \\
O_3 & C^{T(q-1)}
\end{array}\right) .
$$
The group $K$ has order $q^2+q+1$. Since ${\bf X}^K=(X,Y)^K=(X,YC^{T(q-1)})$, the group $K$ maps the points of the quadric $\cQ_i$ to the points of the quadric $\cQ_{q^2+2+i}$, with $i\in\{0,\dots,q^2+q\}$,  where indices are taken modulo $q^2+q+1$.  Hence $K$ permutes the quadrics of $\cN$ in a single orbit. The group $K\times H$ has order $(q^3-1)(q^2+q+1)$ and acts transitively on planes of $\cC$.
	\end{proof}	

	\begin{remark}
Note that the two orbit--codes constructed above are not equivalent. It suffices to note that in the first construction the planes involved in the orbit--code are all disjoint from the two planes $\pi_1$ and $\pi_2$ in the mixed partition of $\PG(5,q)$. In the second construction every plane of the orbit--code meets the plane $\pi_1$ in a point and is  disjoint from $\pi_2$.
	\end{remark}	

\bigskip
{\footnotesize
\noindent\textit{Acknowledgments.}
This work was supported 
by the Italian National Group for Algebraic and Geometric Structures and their Applications (GNSAGA-- INdAM).}

\end{document}